\numberwithin{equation}{section}
\theoremstyle{plain}
\newtheorem{thm}{Theorem}[section]
\newtheorem{theorem}[thm]{Theorem}
\newtheorem{lemma}[thm]{Lemma}
\theoremstyle{definition}
\numberwithin{equation}{section}
\newcommand{\Pic}{{\rm Pic}}
\newcommand{\Spec}{{\rm Spec \,}}
\newcommand{\sA}{{\mathcal A}}
\newcommand{\sQ}{{\mathcal Q}}
\newcommand{\C}{{\mathbb C}}
\renewcommand{\P}{{\mathbb P}}
\newcommand{\R}{{\mathbb R}}
\newcommand{\Z}{{\mathbb Z}}
\newcommand{\id}{{\rm id\hspace{.1ex}}}
\newcommand{\Aut}{{\rm Aut\hspace{.1ex}}}
\newcommand{\Ine}{{\rm Ine}}
\title [Rational surfaces with infinitely many real forms]{Smooth complex projective rational surfaces with infinitely many real forms}
\author{Tien-Cuong Dinh}
\address{Department of Mathematics, National University of Singapore, 10, 
Lower Kent Ridge Road, Singapore 119076}
\email{matdtc@nus.edu.sg}
\author{Keiji Oguiso}
\address{Mathematical Sciences, the University of Tokyo, Meguro Komaba 3-8-1, Tokyo, Japan, and National Center for Theoretical Sciences, Mathematics Division, National Taiwan University, 
Taipei, Taiwan}
\email{oguiso@ms.u-tokyo.ac.jp}
\author{Xun Yu}
\address{Center for Applied Mathematics, Tianjin University, 92 Weijin Road, Nankai District,
Tianjin 300072, P. R. China.
}
\email{xunyu@tju.edu.cn}
\thanks{The first named author is supported by the NUS grant R-146-000-319-114 and MOE grant MOE-T2EP20120-0010. The second named author is supported by JSPS Grant-in-Aid 20H00111, 20H01809, and by NCTS Scholar Program. The third named author is supported by NSFC (No.12071337, No. 11701413, and No. 11831013).}
\subjclass[2010]{Primary 14J50; Secondary 14L30, 14J28.}
\begin{document}

\maketitle

\begin{abstract}
We construct a smooth complex projective rational surface with infinitely many mutually non-isomorphic real forms. This gives the first definite answer to a long standing open question if a smooth complex projective rational surface has only finitely many non-isomorphic real forms or not.
\end{abstract}

\section{Introduction}

We work over $\C$. Let $V_{\R} \to {\rm Spec}\, \R$ be an $\R$-scheme. The associated $\C$-scheme
$$V = V_{\R} \times_{{\rm Spec}\, \R} {\rm Spec}\, \C$$ 
is said to be defined over $\R$. We call $U_{\R} \to {\Spec}\, \R$ a real form, or a real structure, of $V$ if 
$U_{\R} \times_{{\rm Spec}\, \R} {\rm Spec}\, \C$ is isomorphic to $V$ over ${\rm Spec}\, \C$. Two real forms $U_{\R} \to {\rm Spec}\, \R$ and $U_{\R}^\prime \to {\rm Spec}\, \R$ of $V$ are said to be isomorphic if they are isomorphic over ${\rm Spec}\, \R$. (See \cite{BS64} and \cite[Chapter 5]{Se02} for details.) In this paper, we denote the set of real points $V_{\R}(\R)$ simply by $V(\R)$ when $V_{\R}$ is fixed. 

\medskip

The aim of this paper is to give the first definite answer (Theorem \ref{MainIntro}) to the long standing, notoriously difficult question:{\it "Is there a smooth complex projective rational surface with infinitely many mutually non-isomorphic real forms\,?"} (\cite{Kh02}, \cite{Le18}, \cite{DO19}, see also \cite{DK02}, \cite{DIK04}, \cite{Be16}, \cite{Be17}, \cite{DFMJ21}, \cite{Ki20}, \cite{Le21}, \cite{DOY22}, \cite{Bo21a}, \cite{Bo21b} and so on for closely related works.) Previously known examples of projective varieties with infinitely many real forms either have dimension $\ge 3$ or have Kodaira dimension $\ge 0$, and in this paper we give the {\it first} examples of projective rational surfaces with this property.

\medskip

To state our result precisely, we prepare a few notations. 
Let 
$$\P_{\R} := \P_{\R}^1 \times_{{\rm Spec}\, \R} \P_{\R}^1.$$
Then the smooth complex projective rational surface $\P := \P^1 \times \P^1$ is defined over $\R$ as
$$\P = \P_{\R} \times_{{\rm Spec}\, \R} {\rm Spec}\, \C.$$

Let $\lambda \in \R$ be a real number and $T \to \P$ be the blow up of $16$ real points $(a, b)$ of $\P(\R)$, where 
$$a \in \{0, 1, 2, \infty \}\,\, ,\,\, b \in \{0, 1, \lambda, \infty\}.$$
Let $C_{11, T} \simeq \P^1$ be the exceptional curve on $T$ over $(\infty, \infty)$. Let $Q_{T} \in C_{11, T}(\R) = \P^1(\R)$ be a real point and let 
$$X := X_{Q_T} \to T$$ 
be the blow up of $T$ at $Q_T$. Then the surface $X$ is a smooth complex projective rational surface  defined over $\R$. Note that the surface $X$ depends on the two parameters $\lambda \in \R$ and $Q_T \in C_{11, T}(\R)$. 

Our main theorem is now stated as follows:

\begin{theorem}\label{MainIntro}
If both $\lambda \in \R$ and $Q_T \in C_{11, T}(\R)$ are generic, then the smooth complex projective rational surface $X = X_{Q_T}$ has infinitely many mutually non-isomorphic real forms. 
\end{theorem}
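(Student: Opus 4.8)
The plan is to pass from real forms to real structures, and then to separate infinitely many of the latter by a dynamical argument on $\NS(X)$.

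First I would recall the standard dictionary (see \cite{BS64}, \cite{Se02}): the isomorphism classes of real forms of $X$ are in natural bijection with the orbits of $\Aut(X)$ acting by conjugation on the set of real structures (anti-regular involutions) of $X$, i.e. with the pointed set $H^1(\Gal(\C/\R), \Aut(X))$ attached to a fixed real structure. Hence it suffices to exhibit infinitely many real structures $\mu_n$ on $X$ that are pairwise non-conjugate under $\Aut(X)$. The surface $X$ carries at least one real structure $\mu_0$, namely the one induced by the standard real structure of $\P = \P^1 \times \P^1$, because every blown-up point, including $Q_T$, is real.

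Next I would produce the dynamics. The group $\Aut(X)$ acts on the N\'eron--Severi lattice $\NS(X)$, which has rank $\rho = 19$ and signature $(1, 18)$, preserving $K_X$ and the intersection form; and since an anti-holomorphic involution of a complex surface preserves the orientation, each real structure also acts on $\NS(X)$ as a genuine isometry. The core input, to be extracted from the geometry of the two rulings of $\P$ and of the $4 \times 4$ grid configuration, is an automorphism $g \in \Aut(X)$ of infinite order whose action on $\NS(X)$ is loxodromic (positive entropy), together with the relation $\mu_0 \circ g \circ \mu_0^{-1} = g^{-1}$. Granting this, I set $\mu_n := g^n \circ \mu_0$; from $\mu_0 g^n \mu_0^{-1} = g^{-n}$ one checks $\mu_n^2 = \id$, so each $\mu_n$ is again a real structure, and moreover $\mu_n \circ \mu_m = g^{n-m}$ and $\mu_n \circ g \circ \mu_n^{-1} = g^{-1}$.

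The heart of the proof, and the step I expect to be hardest, is to show that the $\mu_n$ fall into infinitely many $\Aut(X)$-conjugacy classes. Here I would work inside the discrete group $O(\NS(X))$ acting on the hyperbolic space $\H$ associated with the positive cone of $\NS(X) \otimes \R$. The isometry $g$ is loxodromic with a geodesic axis $\gamma$ joining its two boundary fixed points; each $\mu_n$, reversing $g$, preserves $\gamma$ and acts on it as the reflection about a point $p_n \in \gamma$, and the relation $\mu_n \mu_m = g^{n-m}$ shows that the $p_n$ are equally spaced along $\gamma$ with spacing half the translation length of $g$. If $h \in \Aut(X)$ conjugates $\mu_n$ to $\mu_m$, then $h$ carries the fixed-point data of $\mu_n$ to that of $\mu_m$, so $\mu_m$ preserves both $\gamma$ and $h(\gamma)$; using that the image of $\Aut(X)$ in $O(\NS(X))$ is discrete (hence acts properly discontinuously on $\H$) and that the stabilizer of the axis $\gamma$ is virtually cyclic, I would deduce that only boundedly many indices $m$ can be conjugate to a given $n$, whence infinitely many classes. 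Note that purely topological invariants of $X(\R)$ cannot suffice here, since Smith--Comessatti bounds confine $X(\R)$ to finitely many topological types; the separation must be genuinely group-theoretic.

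The two places where the genericity hypotheses on $\lambda$ and $Q_T$ must do real work, and where the main technical obstacles lie, are: (i) the construction of the loxodromic $g$ and of the reversing relation $\mu_0 g \mu_0^{-1} = g^{-1}$ --- this is where one must understand $\Aut(X)$ concretely, presumably realizing $g$ as a product of two involutive automorphisms attached to the two conic-bundle structures of $\P$, with $\mu_0$ interchanging them; and (ii) ruling out that some extra, non-generic automorphism of $X$ fuses the reflections $\mu_n$ into finitely many classes. For (ii) I would use genericity to show that $\Aut(X) \to O(\NS(X))$ is injective and that $\Aut(X)$ is no larger than the group controlled by the axis dynamics above, so that the proper-discontinuity argument applies cleanly. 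The final blow-up at the generic point $Q_T \in C_{11,T}(\R)$ is what rigidifies the picture, cutting down continuous or unexpected symmetries of $T$ while retaining the infinite-order $g$.
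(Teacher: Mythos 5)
Your opening reduction is fine and agrees with the paper's framework: since the Galois action on $\Aut(X)$ turns out to be trivial, real forms correspond to conjugacy classes of suitable involutions, and it suffices to produce infinitely many pairwise non-conjugate ones (the paper does this via \cite[Lemma 13]{Le18}). But the dynamical construction at the heart of your proposal cannot produce them, and the failure is visible from your own relations. From $\mu_0\circ g\circ\mu_0^{-1}=g^{-1}$, $\mu_0^2=\id$ and $\mu_n:=g^n\circ\mu_0$ one computes
$$g^{k}\circ\mu_n\circ g^{-k}\;=\;g^{n+k}\circ\bigl(\mu_0\circ g^{-k}\circ\mu_0^{-1}\bigr)\circ\mu_0\;=\;g^{n+2k}\circ\mu_0\;=\;\mu_{n+2k},$$
so $\mu_n$ and $\mu_m$ are conjugate by a power of $g\in\Aut(X)$ whenever $n\equiv m\ (\mathrm{mod}\ 2)$. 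The group $\langle g,\mu_0\rangle$ is infinite dihedral, and an infinite dihedral group has exactly two conjugacy classes of reflections; hence your family yields at most two real forms, regardless of how generic $\lambda$ and $Q_T$ are, and regardless of whether the loxodromic $g$ with the reversing property even exists. The hyperbolic-geometry separation argument collapses for precisely this reason: $g$ itself stabilizes the axis $\gamma$ and translates the reflection points $p_n$ by the full translation length, identifying $p_n$ with $p_{n+2}$. So the correct conclusion from the axis picture is the opposite of the one you draw: all even-index $\mu_n$ are mutually conjugate, as are all odd-index ones.

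The paper's mechanism is structurally different and is designed exactly to escape this dihedral trap. It produces infinitely many involutions $\iota_n=f_1^{-n}\circ\iota\circ f_1^{n}$ on the Kummer K3 surface $S={\rm Km}\,(E\times F)$ sitting above $X$, where $\iota$ is the inversion of one elliptic fibration and $f_1$ a translation of another; no positive-entropy automorphism is used anywhere. These involutions are all conjugate in $\Aut(S)$, but the conjugators $f_1^{n}$ do not fix the blown-up point and therefore do not act on $X$; the substance of Theorem \ref{RealMain} is that no other conjugator survives either. That is proved not by axis dynamics but by fixed-locus geometry: $X^{\iota_{n,X}}$ consists of a genus-$4$ curve, three negative curves and an isolated point, and matching these data forces any $h\in\Aut(X)$ with $h^{-1}\circ\iota_{n,X}\circ h=\iota_{m,X}$ to lie in $\Aut(T,Q_T)\cong\Aut(S,C_{11},\{P,P_1,Q,\theta(Q)\})/\langle\theta\rangle$ (using genericity of $Q$ via Lemma \ref{specialpoint}), after which Lemmas \ref{ine} and \ref{infiniteconj}, resting on \cite[Lemma 4.5]{DO19}, keep the classes infinite. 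If you want to retain a dynamical viewpoint, the lesson is that one needs an infinite family of involutions whose would-be conjugators are destroyed by the blow-up at a generic point -- that destruction, not the creation of a single reversing loxodromic element, is where the genericity hypotheses do their work.
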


The precise definition of the genericity of $\lambda \in \R$ is given in Section \ref{sect2} and the precise definition of the genericity of $Q_T \in C_{11, T}(\R)$ is given in Section \ref{sect3}.

\medskip

We use results in \cite{Le18} and \cite{DO19} in our proof. Especially, we prove Theorem \ref{MainIntro} by using a special K3 surface studied in \cite{Og89} and \cite{DO19}. We emphasize that our way of use of a special K3 surface in the study of real forms of rational surfaces is new and highly non-trivial as the actual main theorem (Theorem \ref{RealMain}) shows.

We know that $\Aut (X_{Q_T})$ is discrete (Lemma \ref{Picard}(2)) but we do not know if $\Aut (X_{Q_T})$ is finitely generated or not for our surface $X_{Q_T}$. It would be interesting to see {\it "if the existence of infinitely many real forms on $X_{Q_T}$ implies that $\Aut (X_{Q_T})$ is not finitely generated or not"}. We note that the discrete automorphism group $\pi_0 (\Aut V) := \Aut (V)/\Aut^0 (V)$ is not finitely generated for all {\it previously known} smooth complex projective varieties $V$ with infinitely many real forms (\cite{Le18}, \cite{DO19}, \cite{DOY22}). Here, $\Aut^0(V)$ denotes the  identity component of $\Aut (V)$.

\medskip

It is known that $V$ has only finitely many real forms and the automorphism group $\Aut (V)$ is finitely generated if $V$ is a K3 surface (\cite{DIK00}, \cite{St85}, see also \cite{CF20}). As a byproduct of our argument used in the proof of Theorem \ref{MainIntro}, we also show the following result. 

\begin{theorem}\label{Main2Intro}
There exist a smooth K3 surface $S$ and a point $Q \in S$ such that the blow up $S'$ of $S$ at $Q$ satisfies the following two statements:
\begin{enumerate}
\item $\Aut (S')$ is discrete and not finitely generated; and
\item $S'$ admits infinitely many mutually non-isomorphic real forms.
\end{enumerate}
\end{theorem}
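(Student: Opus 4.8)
The plan is to take for $S$ exactly the special K3 surface of \cite{Og89} and \cite{DO19} that underlies the proof of Theorem \ref{MainIntro}, together with its real structure $\sigma$ and a generic real point $Q$ (the same point, up to the geometry relating $S$ and $X_{Q_T}$, whose blow-up produces the extra $(-1)$-curve $E$). The first reduction is to identify $\Aut(S')$ with the stabilizer $\Aut(S,Q) := \{f \in \Aut(S) : f(Q) = Q\}$. Indeed $K_{S'} = \mathcal{O}_{S'}(E)$ with $h^0(S', \mathcal{O}_{S'}(E)) = 1$, so $E$ is the unique effective canonical divisor and is preserved by every automorphism of $S'$; contracting $E$ therefore gives a homomorphism $\Aut(S') \to \Aut(S,Q)$, and every element of $\Aut(S,Q)$ lifts to the blow-up, so this map is an isomorphism. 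Since $S$ is a K3 surface we have $H^0(S, T_S) = 0$, hence $\Aut(S)$, and a fortiori $\Aut(S') = \Aut(S,Q)$, is discrete; as $\Aut^0(S') = \{\id\}$ this already settles the discreteness half of (1).

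For (2) I would use the cocycle description of real forms. Fixing the real structure $\sigma$ (which fixes $Q$ and hence descends to $S'$), isomorphism classes of real forms of $S'$ are in bijection with $H^1(\Z/2, \Aut(S'))$, equivalently with $\Aut(S,Q)$-conjugacy classes of antiholomorphic involutions of $S$ fixing $Q$. The mechanism already available from the proof of Theorem \ref{MainIntro} (equivalently Theorem \ref{RealMain}) produces, from the infinite group $\Aut(S)$ and the real locus $S(\R) = \mathrm{Fix}(\sigma)$, automorphisms $g_n \in \Aut(S)$ with $g_n^{-1}(Q) \in S(\R)$, so that $\sigma_n := g_n \sigma g_n^{-1}$ is again a real structure fixing $Q$; the heart of that argument is to verify that infinitely many of the $\sigma_n$ are pairwise non-conjugate under $\Aut(S,Q)$, which translates directly into infinitely many mutually non-isomorphic real forms of $S'$. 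This is precisely the byproduct alluded to: the same count that gives infinitely many real forms of $X_{Q_T}$ gives infinitely many real forms of $S'$.

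The hard part will be (1), the non-finite generation of $\Aut(S') = \Aut(S,Q)$. I emphasize that this cannot be deduced formally from (2): as the introduction notes, whether infinitely many real forms forces the discrete automorphism group to be infinitely generated is open in general. So a direct argument, special to $S$, is needed. My approach would be to exhibit inside $\Aut(S,Q)$ an explicit subgroup that is visibly not finitely generated, built from the geometry of $S$ through $Q$ --- for instance from infinitely many distinct structures on $S$ (elliptic pencils, or configurations of smooth rational curves) adapted to $Q$, each contributing an independent element --- and then to run a ping-pong / Bass--Serre type argument on the action on $\NS(S)$ (or on the countable orbit $\Aut(S) \cdot Q$) to show that no finite subset of $\Aut(S,Q)$ can generate the whole stabilizer. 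The delicate point is that $\Aut(S)$ itself is finitely generated by Sterk's theorem, so the infinite generation must be created purely by passing to the stabilizer of the generic real point $Q$; making this rigorous --- pinning down the independent family and proving the absence of relations collapsing it to finitely many generators --- is where the real work lies.
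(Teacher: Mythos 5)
Your reduction $\Aut(S')\cong\Aut(S,Q)$ via the unique effective canonical curve, and the discreteness claim, agree with the paper (Theorem 4.1 there), and your opening framing of (2) via conjugacy classes of antiholomorphic involutions fixing $Q$ is the correct one. But the core of (1) is a genuine gap, and you say so yourself: "making this rigorous \dots is where the real work lies" is an admission that you have no proof of non-finite generation. The approach you sketch (ping-pong / Bass--Serre on $\NS(S)$ or on the orbit $\Aut(S)\cdot Q$) is also the wrong tool: ping-pong arguments establish freeness or free-product structure, not obstructions to finite generation. The paper's mechanism is completely different and is the actual content of the theorem. One chooses $Q\in C_{11}(\R)$ generic so that $\Aut(S')=\Aut(S,Q)=\Aut(S,C_{11},Q)$, which contains the inertia group $\Ine(S,C_{11})$ and its symplectic part $\Ine^{s}(S,C_{11})$ as finite-index subgroups. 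Any $f\in\Ine^{s}(S,C_{11})$ fixes $P=C\cap C_{11}$, acts trivially on $T_{C_{11},P}$, and has $\det df|_{T_{S,P}}=1$ (since $f^*\omega_S=\omega_S$), so $df|_{T_{S,P}}=\id$ and $f|_C$ is a translation $x\mapsto x+a$ in the affine coordinate of $C$. Thus the restriction map $\tau$ sends $\Ine^{s}(S,C_{11})$ into the abelian group $(\C,+)$. The explicit elements $f_1^{-n}\circ(\iota\circ\iota_1)\circ f_1^{n}$ lie in $\Ine^{s}(S,C_{11})$ and restrict to $x\mapsto x+2^{-n}$, so the image of $\tau$ contains $\Z[1/2]$; a subgroup of a finitely generated abelian group is finitely generated, so the image, hence $\Ine^{s}(S,C_{11})$, hence its finite-index overgroup $\Aut(S')$, is not finitely generated. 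Nothing in your proposal supplies this (or any) mechanism; identifying the "independent family" and the abelian quotient that detects it is precisely the theorem, and it comes from the two elliptic fibrations of Section 2, not from lattice dynamics on $\NS(S)$.

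There is also a flaw in your mechanism for (2). You propose $\sigma_n:=g_n\sigma g_n^{-1}$ with $g_n\in\Aut(S)$, $g_n^{-1}(Q)\in S(\R)$. In this setting the relevant automorphisms are all defined over $\R$, i.e.\ commute with $\sigma$ (this triviality of the $\Gal(\C/\R)$-action is exactly what the paper imports from [DO19, Lemma 4.6] to apply Lesieutre's Lemma 13); for such $g_n$ your $\sigma_n$ is just $\sigma$, and more generally conjugating a real structure by an automorphism produces an isomorphic real form of $S$, so this construction has no chance of yielding infinitely many classes by itself. What the paper does is \emph{twist} rather than conjugate: the new real structures are the compositions $\iota_n\circ\sigma$, where $\iota_n\in\Ine^{\pm}(S,C_{11})$ are the holomorphic involutions of Section 2 (they fix $C_{11}$ pointwise, hence fix $Q$, hence lift to $S'$); since the Galois action is trivial, $\iota_n\circ\sigma$ and $\iota_m\circ\sigma$ are conjugate under $\Aut(S')$ exactly when $\iota_n$ and $\iota_m$ are, and Lemma 3.1 of the paper gives infinitely many conjugacy classes of the $\iota_n$ inside the finite-index subgroup $\Ine^{\pm}(S,C_{11})\subset\Aut(S')$. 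So both halves of your proposal are missing the same ingredients: the inertia group of $C_{11}$, the explicit involutions $\iota_n$, and the translation representation on $C$.
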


See also Section \ref{sect4} for a more precise construction of $S'$. 

Theorem \ref{Main2Intro} gives a positive answer for the question of Mukai to us: {\it "Is it possible to make a smooth complex projective surface with non-finitely generated automorphisms group and/or with infinitely many real forms, as one point blow up of a K3 surface\,?"} and gives simplest known surfaces satisfying the two above statements. Compare with our previous construction in \cite{DO19}. 

\medskip

Our proof of Theorems \ref{MainIntro} and \ref{Main2Intro} uses the uncountability of the base fields $\R$ and $\C$ in an essential way. Especially, our proof does not tell if there exist a smooth K3 surface $S$ and a point $Q \in S$ such that Theorem \ref{Main2Intro}(1) holds when the algebraically closed base field is countable. Compare with \cite[Theorem 1.4]{Og20}.

\medskip

In Section \ref{sect2}, we fix notations concerning a Kummer surface ${\rm Km}\, (E \times F)$ and review some properties of the surface from \cite{DO19}, which we will use throughout this paper. Some of the results in Section \ref{sect2} are also new. We then prove Theorem \ref{MainIntro} in Section \ref{sect3} and Theorem \ref{Main2Intro} in Section \ref{sect4} respectively. 

\section{Preliminaries}\label{sect2}

In this section, we fix notations concerning automorphisms, some Kummer 
surface ${\rm Km}\, (E \times F)$ and also recall some properties of the surface. Though we will use the same notations in \cite{DO19}, we recall all notations we will use later for the convenience of the reader. We also recall some properties which we will use. We refer the reader to the corresponding statements of \cite{DO19} for details, while some statements presented here are new, for which we will also give a full proof.  

\medskip

For a variety $V$ and a family of subsets $W_i \subset V$ ($i \in I$), we define the subgroups $\Aut (V, W_i\, (i \in I))$, ${\rm Ine} (Y, W_i\, (i \in I))$ of $\Aut (V)$ by 
$$\Aut (V, W_i\, (i \in I)) := \{f \in \Aut (V)\,|\, f(W_i) = W_i\, (\forall i \in I)\}.$$
$$\Ine (V, W_i\, (i \in I)):=\{g\in \Aut (V, W_i (i \in I))\, |\,  g|_{W_i}=\id_{W_i}\, (\forall i \in I) \}.$$ 
We denote the fixed point set of $f \in \Aut (V)$ by
$$V^{f} := \{x \in V\, |\, f(x) = x\}.$$

\medskip

Let $E$ and $F$ be the elliptic curves defined over $\R$ respectively by the Weierstrass forms
$$y^2 = x(x-1)(x-2)\,\, ,\,\, (y')^2 = x'(x'-1)(x'- \lambda),$$
where $\lambda$ is a {\it generic} real number in the sense that $E$ and $F$ are not isogenous. For instance transcendental real number $\lambda$ is such a number. Let
$$S := {\rm Km}\, (E \times F)$$
be the Kummer K3 surface associated to the abelian surface 
$E \times F$. The surface $S$ is also defined over $\R$ under the natural real structure induced from the real structures of $E$ and $F$ above.

Let $\{a_i\}_{i=1}^{4}$ and $\{b_i\}_{i=1}^{4}$ be the $2$-torsion groups of $F$ and $E$ respectively. Then $S$ contains 24 smooth rational curves as in Figure 1; $8$ smooth rational curves $E_i$, $F_i$, with $1 \le i \le 4$, arising from $8$ elliptic curves $E \times \{a_i\}$, $\{b_i\} \times F$ on $E \times F$, and $16$ exceptional curves $C_{ij}$, with $1\le i,j \le 4$, over the $16$ singular points of type $A_1$ on the quotient surface $E \times F/\langle -1_{E \times F}\rangle$. See also Figure 1 for the configuration of these $24$ smooth rational curves on $S$. Note that these $24$ smooth rational curves are also defined over $\R$ under the natural real structure of $S$. In what follows, we use the notation in Figure 1.

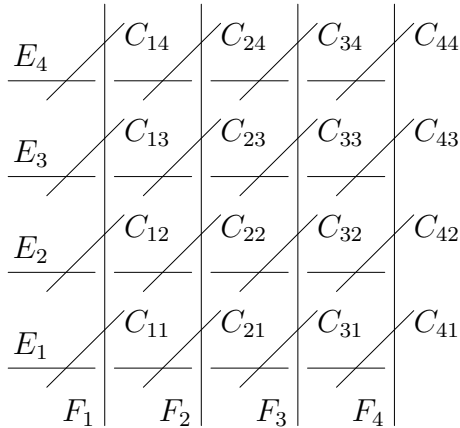
\begin{figure}
 %\centering
% \input{alldiv.tpc} 
\unitlength 0.1in
\begin{picture}(25.000000,24.000000)(-1.000000,-23.500000)
\put(4.500000, -22.000000){\makebox(0,0)[rb]{$F_1$}}%
\put(9.500000, -22.000000){\makebox(0,0)[rb]{$F_2$}}%
\put(14.500000, -22.000000){\makebox(0,0)[rb]{$F_3$}}%
\put(19.500000, -22.000000){\makebox(0,0)[rb]{$F_4$}}%
\put(0.250000, -18.500000){\makebox(0,0)[lb]{$E_1$}}%
\put(0.250000, -13.500000){\makebox(0,0)[lb]{$E_2$}}%
\put(0.250000, -8.500000){\makebox(0,0)[lb]{$E_3$}}%
\put(0.250000, -3.500000){\makebox(0,0)[lb]{$E_4$}}%
\put(6.000000, -16.000000){\makebox(0,0)[lt]{$C_{11}$}}%
\put(6.000000, -11.000000){\makebox(0,0)[lt]{$C_{12}$}}%
\put(6.000000, -6.000000){\makebox(0,0)[lt]{$C_{13}$}}%
\put(6.000000, -1.000000){\makebox(0,0)[lt]{$C_{14}$}}%
\put(11.000000, -16.000000){\makebox(0,0)[lt]{$C_{21}$}}%
\put(11.000000, -11.000000){\makebox(0,0)[lt]{$C_{22}$}}%
\put(11.000000, -6.000000){\makebox(0,0)[lt]{$C_{23}$}}%
\put(11.000000, -1.000000){\makebox(0,0)[lt]{$C_{24}$}}%
\put(16.000000, -16.000000){\makebox(0,0)[lt]{$C_{31}$}}%
\put(16.000000, -11.000000){\makebox(0,0)[lt]{$C_{32}$}}%
\put(16.000000, -6.000000){\makebox(0,0)[lt]{$C_{33}$}}%
\put(16.000000, -1.000000){\makebox(0,0)[lt]{$C_{34}$}}%
\put(21.000000, -16.000000){\makebox(0,0)[lt]{$C_{41}$}}%
\put(21.000000, -11.000000){\makebox(0,0)[lt]{$C_{42}$}}%
\put(21.000000, -6.000000){\makebox(0,0)[lt]{$C_{43}$}}%
\put(21.000000, -1.000000){\makebox(0,0)[lt]{$C_{44}$}}%
\special{pa 500 2200}%
\special{pa 500 0}%
\special{fp}%
\special{pa 1000 2200}%
\special{pa 1000 0}%
\special{fp}%
\special{pa 1500 2200}%
\special{pa 1500 0}%
\special{fp}%
\special{pa 2000 2200}%
\special{pa 2000 0}%
\special{fp}%
\special{pa 0 1900}%
\special{pa 450 1900}%
\special{fp}%
\special{pa 550 1900}%
\special{pa 950 1900}%
\special{fp}%
\special{pa 1050 1900}%
\special{pa 1450 1900}%
\special{fp}%
\special{pa 1550 1900}%
\special{pa 1950 1900}%
\special{fp}%
\special{pa 0 1400}%
\special{pa 450 1400}%
\special{fp}%
\special{pa 550 1400}%
\special{pa 950 1400}%
\special{fp}%
\special{pa 1050 1400}%
\special{pa 1450 1400}%
\special{fp}%
\special{pa 1550 1400}%
\special{pa 1950 1400}%
\special{fp}%
\special{pa 0 900}%
\special{pa 450 900}%
\special{fp}%
\special{pa 550 900}%
\special{pa 950 900}%
\special{fp}%
\special{pa 1050 900}%
\special{pa 1450 900}%
\special{fp}%
\special{pa 1550 900}%
\special{pa 1950 900}%
\special{fp}%
\special{pa 0 400}%
\special{pa 450 400}%
\special{fp}%
\special{pa 550 400}%
\special{pa 950 400}%
\special{fp}%
\special{pa 1050 400}%
\special{pa 1450 400}%
\special{fp}%
\special{pa 1550 400}%
\special{pa 1950 400}%
\special{fp}%
\special{pa 200 2000}%
\special{pa 600 1600}%
\special{fp}%
\special{pa 200 1500}%
\special{pa 600 1100}%
\special{fp}%
\special{pa 200 1000}%
\special{pa 600 600}%
\special{fp}%
\special{pa 200 500}%
\special{pa 600 100}%
\special{fp}%
\special{pa 700 2000}%
\special{pa 1100 1600}%
\special{fp}%
\special{pa 700 1500}%
\special{pa 1100 1100}%
\special{fp}%
\special{pa 700 1000}%
\special{pa 1100 600}%
\special{fp}%
\special{pa 700 500}%
\special{pa 1100 100}%
\special{fp}%
\special{pa 1200 2000}%
\special{pa 1600 1600}%
\special{fp}%
\special{pa 1200 1500}%
\special{pa 1600 1100}%
\special{fp}%
\special{pa 1200 1000}%
\special{pa 1600 600}%
\special{fp}%
\special{pa 1200 500}%
\special{pa 1600 100}%
\special{fp}%
\special{pa 1700 2000}%
\special{pa 2100 1600}%
\special{fp}%
\special{pa 1700 1500}%
\special{pa 2100 1100}%
\special{fp}%
\special{pa 1700 1000}%
\special{pa 2100 600}%
\special{fp}%
\special{pa 1700 500}%
\special{pa 2100 100}%
\special{fp}%
\end{picture}%
 \caption{Curves $E_i$, $F_j$ and $C_{ij}$ with $E_i^2=F_j^2=C_{ij}^2=-2$}
 \label{fig1}
\end{figure}

Throughout this paper, we use $x$ for the affine coordinate of $E_1 = E/\langle -1_E \rangle$ and $x'$ for the affine coordinate of $F_1 = F/\langle -1_F \rangle$ and set (with respect to the coordinates $x$ and $x'$):
$$C := E_{1}\,\, ,\,\, P := C \cap C_{11} = \infty\,\, ,\,\, C \cap C_{21} = 0\,\, ,\,\, C \cap C_{31} = 1\,\, ,\,\, C \cap C_{41} = 2.$$
$$P_1 := F_1 \cap C_{11} = \infty\,\, ,\,\, F_1 \cap C_{12} = 0\,\, ,\,\, F_1 \cap C_{13} = 1\,\, ,\,\, F_1 \cap C_{14} = \lambda.$$

Let $\theta \in \Aut (S)$ be the involution induced by the involution $(1_E, -1_F) \in \Aut (E \times F)$. Then $\theta^* \omega_S = -\omega_S$. Here and hereafter $\omega_S$ stands for a non-zero global holomorphic $2$-form on $S$, which is unique up to scalar multiplications by $\C \setminus \{0\}$. 

We consider the quotient surface and the associated quotient morphism 
$$T := S/\langle \theta \rangle\,\, ,\,\, \pi : S \to T.$$
By definition of $\theta$, we find that $T$ is the blow-up of $\P^1 \times \P^1$ at the $16$ points $(a, b)$ ($a \in \{\infty, 0, 1, 2\}$, $b \in \{\infty, 0, 1, \lambda\}$) under the identification $\P^1 \times \P^1 = C \times F_1$. In particular, $T$ is a smooth complex projective rational surface defined over $\R$. Note also that the surface $T$ with natural real structure here coincides with the surface $T$ with real structure defined in Introduction.

\begin{lemma}\label{theta} We have the following:
\begin{enumerate}
\item $S^{\theta} = \cup_{i=1}^{4} (E_i \cup F_i)$ and $f \circ \theta = \theta \circ f$ for all $f \in {\rm Aut}\, (S)$. In particular, $\Aut (S) = \Aut (S,\cup_{i=1}^{4} (E_i \cup F_i))$. Moreover, $\theta(R) = R$ for any smooth rational curve $R$ on $S$.
\item $\Aut (S, P) = \Aut(S, C, P)$.
\item $\Aut (T) = \Aut (S)/ \langle \theta \rangle$. 
\end{enumerate}
\end{lemma}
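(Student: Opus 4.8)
The plan is to handle the three parts in order, since (2) and (3) will follow quickly once the fixed locus and the centrality of $\theta$ in part (1) are in hand. For the fixed locus I would work upstairs on $E\times F$. Writing $\iota=-1_{E\times F}$, the commuting involutions $\theta,\iota$ generate $(\Z/2)^2$, and away from the sixteen exceptional curves a point of $S$ is an orbit $\{(e,f),(-e,-f)\}$, which is $\theta$-fixed exactly when $(e,-f)\in\{(e,f),(-e,-f)\}$, i.e. when $f\in F[2]$ or $e\in E[2]$; the two loci $E\times F[2]$ and $E[2]\times F$ descend to $\cup_i E_i$ and $\cup_j F_j$. On each exceptional $C_{ij}$ the involution $\theta$ is the projectivization of $\mathrm{diag}(1,-1)$ acting on the normal directions at the corresponding node, so it has exactly the two fixed points at which $C_{ij}$ meets $E_j$ and $F_i$. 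Hence $S^{\theta}=\cup_i(E_i\cup F_i)$, a disjoint union of eight smooth rational curves (consistent with the fixed locus of a non-symplectic involution being a smooth curve).

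The heart of the argument, and the step I expect to be the main obstacle, is the centrality $f\circ\theta=\theta\circ f$, which I would obtain by pinning down $\theta^{*}$ on $H^{2}(S,\Z)$. Since $\theta$ carries each of the $24$ curves to itself (fixing $E_i,F_j$ pointwise and preserving each $C_{ij}$), and since for non-isogenous $E,F$ these $24$ classes span $\NS(S)_{\Q}$ — here the genericity of $\lambda$, giving $\rho(S)=18$, is used — we get $\theta^{*}|_{\NS(S)}=\id$. On the transcendental lattice $\mathcal T_S:=\NS(S)^{\perp}\subset H^2(S,\Z)$, which for non-isogenous $E,F$ is identified with $H^{1}(E)\otimes H^{1}(F)$, the involution $\theta=(1_E,-1_F)$ acts by $1\otimes(-1)=-\id$, so $\theta^{*}|_{\mathcal T_S}=-\id$. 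For any $f\in\Aut(S)$, the Hodge isometry $f^{*}$ preserves both $\NS(S)$ and $\mathcal T_S$, on each of which $\theta^{*}$ is a scalar and hence central; thus $f^{*}\theta^{*}=\theta^{*}f^{*}$ on $(\NS(S)\oplus\mathcal T_S)\otimes\Q=H^{2}(S,\Q)$ and therefore on $H^{2}(S,\Z)$, and the global Torelli theorem gives $f\theta=\theta f$. The two consequences follow immediately: every $f$ preserves $S^{\theta}=\mathrm{Fix}(\theta)$ because $\mathrm{Fix}(f\theta f^{-1})=\mathrm{Fix}(\theta)$, giving $\Aut(S)=\Aut(S,\cup_i(E_i\cup F_i))$; and for any smooth rational curve $R$ one has $\theta_{*}[R]=[R]$ from $\theta^{*}|_{\NS}=\id$, whence $\theta(R)=R$ since an irreducible $(-2)$-curve is the unique effective divisor in its class.

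For (2) the inclusion $\supseteq$ is trivial, and for $\subseteq$ I would use that any $f\in\Aut(S,P)$ permutes the eight disjoint components of $S^{\theta}$; as $C=E_1$ is the only one of them passing through $P=E_1\cap C_{11}$ and $f(P)=P$, necessarily $f(E_1)=E_1$, i.e. $f\in\Aut(S,C,P)$. For (3), the assignment $f\mapsto\bar f$ is a homomorphism $\Aut(S)\to\Aut(T)$ by the commutation just proved, with kernel $\langle\theta\rangle$: if $\pi\circ f=\pi$ then the closed sets $\{f=\id\}$ and $\{f=\theta\}$ cover the connected surface $S$, so $f\in\{\id,\theta\}$. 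Surjectivity is the remaining technical point: $\pi$ presents $S$ as the double cover of $T$ branched along $B:=\pi(S^{\theta})\in|-2K_T|$ with associated class $-K_T$, and since the components $\pi(E_i),\pi(F_j)$ of $B$ are pairwise disjoint with self-intersection $-4$, their negative-definite span forces $B$ to be the unique member of $|-2K_T|$. Hence any $g\in\Aut(T)$, preserving $K_T$, satisfies $g(B)=B$, so it preserves the double-cover data and lifts to $S$, yielding $\Aut(T)=\Aut(S)/\langle\theta\rangle$.
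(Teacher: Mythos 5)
Your proposal is correct, and it is worth recording where it diverges from the paper's own treatment. For parts (1) and (2) the paper does not argue at all: it cites \cite[Lemmas 3.3 and 3.4]{DO19} and \cite[Lemmas (1.3) and (1.4)]{Og89}, adding only the one-line observation (which you reproduce verbatim in your part (2)) that $C=E_1$ is the unique component of $S^\theta$ through $P$. Your proof of (1) --- the fixed-locus computation upstairs on $E\times F$ together with the action on the sixteen exceptional curves, then $\theta^*=\id$ on $\NS(S)$ (using that the $24$ curve classes span $\NS(S)_{\Q}$, of rank $18$ for non-isogenous $E,F$), $\theta^*=-\id$ on the transcendental lattice via $H^1(E)\otimes H^1(F)$, and finally faithfulness of $\Aut(S)\to {\rm O}(H^2(S,\Z))$ --- is precisely the standard argument underlying those citations, so you have supplied in full what the paper outsources. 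For part (3), both proofs turn on the same two points: the key claim $|-2K_T|=\{D\}$ for $D=\sum_{i=1}^4(\pi(E_i)+\pi(F_i))$, and the fact that any $g\in\Aut(T)$ preserves the class $-2K_T$, hence the divisor $D$, hence lifts to the double cover $S$. But you justify the key claim differently: the paper gives a self-contained induction (a linear system with negative self-intersection has a fixed component; strip components of $D$ one at a time until reaching $\dim|0|\ge 1$, a contradiction), whereas you invoke the standard rigidity of an effective divisor whose support has negative definite intersection matrix --- valid, and shorter, but resting on a lemma you do not prove. You also make explicit that the kernel of $\Aut(S)\to\Aut(T)$ is exactly $\langle\theta\rangle$, a point the paper leaves implicit in the phrase ``natural inclusion $\Aut(S)/\langle\theta\rangle\subset\Aut(T)$''; here your appeal to connectedness should really be to irreducibility of $S$ (two proper closed subsets can cover a connected reducible space), which of course holds for a K3 surface.
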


\begin{proof} The assertion (1) is proved by \cite[Lemma 3.3]{DO19} or by \cite[Lemmas (1.3) and (1.4)]{Og89}. The assertion (2) is proved by \cite[Lemma 3.4]{DO19} as an immediate but very important consequence of (1). Indeed, since $C = E_1$ is the unique irreducible component of $\cup_{i=1}^{4} (E_i \cup F_i)$ containing the point $P$, we find that $f(C) = C$ if $f \in \Aut (S, P)$. 

We show the assertion  (3). By (1), we have a natural inclusion $\Aut (S)/ \langle \theta \rangle \subset \Aut (T)$. By the construction of $T$, the quotient morphism $\pi$ is nothing but the finite double cover branched along the divisor $D := \sum_{i=1}^{4} (\pi(E_i) + \pi(F_i))$. Note that $D$ is a disjoint union of $8$ smooth rational curves of self-intersection $-4$ and $D \in |-2K_T|$. We claim that 
$$|-2K_T| = \{D\}.$$
This is proved as follows. Observe that $D$ is a sum of disjoint irreducible curves with negative self-intersection number. If $\dim |D| \ge 1$, then $|D|$ has a fixed component, as $(D^2)_T < 0$. Let $R$ be an irreducible component of the fixed component of $|D|$. Then $R$ is one of $\pi(E_i)$ or $\pi(F_i)$, and $D-R$ is an effective divisor which is again a disjoint union of irreducible curves of negative self-intersection and satisfies $\dim |D-R| = \dim |D| \ge 1$. Then, applying the same argument as above, we deduce that $|D-R|$ has a fixed component, say $R'$, such that $D-R -R'$ is an effective divisor which is again a disjoint union of irreducible curves of negative self-intersection and satisfies $\dim |D-R-R'| = \dim |D-R| \ge 1$. Repeating this, we finally reach the case where $\dim |Z| \ge 1$ for 
the divisor $Z=0$, a contradiction. Thus, $\dim |D| = 0$ and the claim follows.

Let $f \in \Aut (T)$. Then, since $f$ preserves the class of $-2K_T$ in $\Pic\, (T) \simeq {\rm NS}\, (T)$, we have 
$$f \in \Aut\big(T,\sum_{i=1}^{4} (\pi(E_i) + \pi(F_i))\big)$$
by the claim above. Hence $f$ lifts to an automorphism of $S$. This completes the proof.  
\end{proof}

From now, we denote $\pi(W)$ simply by $W_T$. For instance $\pi(C_{11}) = C_{11, T}$, which is consistent with the notation in Introduction. 

\medskip 

Next, we recall one more important involution $\iota$ of $S$ and 
its conjugates from \cite{DO19}.
As in \cite[Page 956]{DO19}, consider the elliptic fibrations $\Phi_{D_i} : S \to \P^1$ ($i = 1$, $2$) on $S$ defined respectively by the complete linear systems $|D_i|$ of the divisors of Kodaira's singular fiber type,
$$D_1 := C + C_{11} + F_1 + C_{12} + E_2 + C_{22} + F_2 + C_{21}$$
and
$$D_2 := C + 2C_{11} + E_2 + 2C_{12} + E_{3} + 2C_{13} + 3F_{1},$$
see Figure \ref{fig2}. We choose $C_{31}$ as the zero section of both $\Phi_{D_1}$ and $\Phi_{D_2}$. 

\begin{figure}
		\unitlength 0.1in
		\begin{picture}(25.000000,24.000000)(-1.000000,-23.500000)
		
		        %D1
			\put(-10.500000, -22.000000){\makebox(0,0)[rb]{$F_1$}}%
			\put(-5.500000, -22.000000){\makebox(0,0)[rb]{$F_2$}}%
			\put(-0.500000, -22.000000){\makebox(0,0)[rb]{$F_3$}}%
			\put(4.500000, -22.000000){\makebox(0,0)[rb]{$F_4$}}%
			\put(-14.750000, -18.500000){\makebox(0,0)[lb]{$C$}}%
			\put(-14.750000, -13.500000){\makebox(0,0)[lb]{$E_2$}}%
			\put(-14.750000, -8.500000){\makebox(0,0)[lb]{$E_3$}}%
			\put(-14.750000, -3.500000){\makebox(0,0)[lb]{$E_4$}}%
			\put(-9.000000, -16.000000){\makebox(0,0)[lt]{$C_{11}$}}%
			\put(-9.000000, -11.000000){\makebox(0,0)[lt]{$C_{12}$}}%
			\put(-9.000000, -6.000000){\makebox(0,0)[lt]{$C_{13}$}}%
			\put(-9.000000, -1.000000){\makebox(0,0)[lt]{$C_{14}$}}%
			\put(-4.000000, -16.000000){\makebox(0,0)[lt]{$C_{21}$}}%
			\put(-4.000000, -11.000000){\makebox(0,0)[lt]{$C_{22}$}}%
			\put(-4.000000, -6.000000){\makebox(0,0)[lt]{$C_{23}$}}%
			\put(-4.000000, -1.000000){\makebox(0,0)[lt]{$C_{24}$}}%
			\put(1.000000, -16.000000){\makebox(0,0)[lt]{$C_{31}$}}%
			\put(1.000000, -11.000000){\makebox(0,0)[lt]{$C_{32}$}}%
			\put(1.000000, -6.000000){\makebox(0,0)[lt]{$C_{33}$}}%
			\put(1.000000, -1.000000){\makebox(0,0)[lt]{$C_{34}$}}%
			\put(6.000000, -16.000000){\makebox(0,0)[lt]{$C_{41}$}}%
			\put(6.000000, -11.000000){\makebox(0,0)[lt]{$C_{42}$}}%
			\put(6.000000, -6.000000){\makebox(0,0)[lt]{$C_{43}$}}%
			\put(6.000000, -1.000000){\makebox(0,0)[lt]{$C_{44}$}}%
			
			\linethickness{1mm}
			\put(-10.000000, -22.000000){\line(0,1){22}}%F_1
                          \linethickness{1mm}
			 \put(-5.000000, -22.000000){\line(0,1){22}}%F_2
                          \linethickness{0.1mm}
			 \put(0.000000, -22.000000){\line(0,1){22}}%F_3
			 \linethickness{0.1mm}
			 \put(5.000000, -22.000000){\line(0,1){22}}%F_4
                          \linethickness{1mm}
			 \put(-15.000000, -19.000000){\line(1,0){4.5}}%E_1
			\linethickness{1mm}
			\put(-9.500000, -19.000000){\line(1,0){4}}%E_1
                         \linethickness{1mm}
			\put(-4.500000, -19.000000){\line(1,0){4}}%E_1
                         \linethickness{1mm}
			\put(0.500000, -19.000000){\line(1,0){4}}%E_1
			 \linethickness{1mm}
			 \put(-15.000000, -14.000000){\line(1,0){4.5}}%E_2
			 \linethickness{1mm}
			 \put(-9.500000, -14.000000){\line(1,0){4}}%E_2
			 \linethickness{1mm}
			 \put(-4.500000, -14.000000){\line(1,0){4}}%E_2
			\linethickness{1mm}
			 \put(0.500000, -14.000000){\line(1,0){4}}%E_2
			 \linethickness{0.1mm}
			 \put(-15.000000, -9.000000){\line(1,0){4.5}}%E_3
			 \linethickness{0.1mm}
			 \put(-9.500000, -9.000000){\line(1,0){4}}%E_3
			 \linethickness{0.1mm}
			 \put(-4.500000, -9.000000){\line(1,0){4}}%E_3
			 \linethickness{0.1mm}
			 \put(0.500000, -9.000000){\line(1,0){4}}%E_3
			 \linethickness{0.1mm}
			 \put(-15.000000, -4.000000){\line(1,0){4.5}}%E_4
			 \linethickness{0.1mm}
			 \put(-9.500000, -4.000000){\line(1,0){4}}%E_4
			 \linethickness{0.1mm}
			 \put(-4.500000, -4.000000){\line(1,0){4}}%E_4
			 \linethickness{0.1mm}
			 \put(0.500000, -4.000000){\line(1,0){4}}%E_4
			 % [arxiv_v2: inline-PS \special stripped, 137 chars]%C_11
			 % [arxiv_v2: inline-PS \special stripped, 137 chars]%C_12
			% [arxiv_v2: inline-PS \special stripped, 139 chars]%C_13
			% [arxiv_v2: inline-PS \special stripped, 139 chars]%C_14
			% [arxiv_v2: inline-PS \special stripped, 137 chars]%C_21
			% [arxiv_v2: inline-PS \special stripped, 137 chars]%C_22
			% [arxiv_v2: inline-PS \special stripped, 139 chars]%C_23
			% [arxiv_v2: inline-PS \special stripped, 139 chars]%C_24
			% [arxiv_v2: inline-PS \special stripped, 140 chars]%C_31
			% [arxiv_v2: inline-PS \special stripped, 140 chars]%C_32
			% [arxiv_v2: inline-PS \special stripped, 139 chars]%C_33
			% [arxiv_v2: inline-PS \special stripped, 139 chars]%C_34
			% [arxiv_v2: inline-PS \special stripped, 139 chars]%C_41
			% [arxiv_v2: inline-PS \special stripped, 139 chars]%C_42
			% [arxiv_v2: inline-PS \special stripped, 138 chars]%C_43			
			% [arxiv_v2: inline-PS \special stripped, 138 chars]%C_44
			                
                %D2
                \put(19.500000, -22.000000){\makebox(0,0)[rb]{$F_1$}}%
			\put(24.500000, -22.000000){\makebox(0,0)[rb]{$F_2$}}%
			\put(29.500000, -22.000000){\makebox(0,0)[rb]{$F_3$}}%
			\put(34.500000, -22.000000){\makebox(0,0)[rb]{$F_4$}}%
			\put(15.250000, -18.500000){\makebox(0,0)[lb]{$C$}}%
			\put(15.250000, -13.500000){\makebox(0,0)[lb]{$E_2$}}%
			\put(15.250000, -8.500000){\makebox(0,0)[lb]{$E_3$}}%
			\put(15.250000, -3.500000){\makebox(0,0)[lb]{$E_4$}}%
			\put(21.000000, -16.000000){\makebox(0,0)[lt]{$C_{11}$}}%
			\put(21.000000, -11.000000){\makebox(0,0)[lt]{$C_{12}$}}%
			\put(21.000000, -6.000000){\makebox(0,0)[lt]{$C_{13}$}}%
			\put(21.000000, -1.000000){\makebox(0,0)[lt]{$C_{14}$}}%
			\put(26.000000, -16.000000){\makebox(0,0)[lt]{$C_{21}$}}%
			\put(26.000000, -11.000000){\makebox(0,0)[lt]{$C_{22}$}}%
			\put(26.000000, -6.000000){\makebox(0,0)[lt]{$C_{23}$}}%
			\put(26.000000, -1.000000){\makebox(0,0)[lt]{$C_{24}$}}%
			\put(31.000000, -16.000000){\makebox(0,0)[lt]{$C_{31}$}}%
			\put(31.000000, -11.000000){\makebox(0,0)[lt]{$C_{32}$}}%
			\put(31.000000, -6.000000){\makebox(0,0)[lt]{$C_{33}$}}%
			\put(31.000000, -1.000000){\makebox(0,0)[lt]{$C_{34}$}}%
			\put(36.000000, -16.000000){\makebox(0,0)[lt]{$C_{41}$}}%
			\put(36.000000, -11.000000){\makebox(0,0)[lt]{$C_{42}$}}%
			\put(36.000000, -6.000000){\makebox(0,0)[lt]{$C_{43}$}}%
			\put(36.000000, -1.000000){\makebox(0,0)[lt]{$C_{44}$}}%
			
			\linethickness{1mm}
			\put(20.000000, -22.000000){\line(0,1){22}}%F_1
                          \linethickness{0.1mm}
			 \put(25.000000, -22.000000){\line(0,1){22}}%F_2
                          \linethickness{0.1mm}
			 \put(30.000000, -22.000000){\line(0,1){22}}%F_3
			 \linethickness{0.1mm}
			 \put(35.000000, -22.000000){\line(0,1){22}}%F_4
                          \linethickness{1mm}
			 \put(15.000000, -19.000000){\line(1,0){4.5}}%E_1
			\linethickness{1mm}
			\put(20.500000, -19.000000){\line(1,0){4}}%E_1
                         \linethickness{1mm}
			\put(25.500000, -19.000000){\line(1,0){4}}%E_1
                         \linethickness{1mm}
			\put(30.500000, -19.000000){\line(1,0){4}}%E_1
			 \linethickness{1mm}
			 \put(15.000000, -14.000000){\line(1,0){4.5}}%E_2
			 \linethickness{1mm}
			 \put(20.500000, -14.000000){\line(1,0){4}}%E_2
			 \linethickness{1mm}
			 \put(25.500000, -14.000000){\line(1,0){4}}%E_2
			\linethickness{1mm}
			 \put(30.500000, -14.000000){\line(1,0){4}}%E_2
			 \linethickness{1mm}
			 \put(15.000000, -9.000000){\line(1,0){4.5}}%E_3
			 \linethickness{1mm}
			 \put(20.500000, -9.000000){\line(1,0){4}}%E_3
			 \linethickness{1mm}
			 \put(25.500000, -9.000000){\line(1,0){4}}%E_3
			 \linethickness{1mm}
			 \put(30.500000, -9.000000){\line(1,0){4}}%E_3
			 \linethickness{0.1mm}
			 \put(15.000000, -4.000000){\line(1,0){4.5}}%E_4
			 \linethickness{0.1mm}
			 \put(20.500000, -4.000000){\line(1,0){4}}%E_4
			 \linethickness{0.1mm}
			 \put(25.500000, -4.000000){\line(1,0){4}}%E_4
			 \linethickness{0.1mm}
			 \put(30.500000, -4.000000){\line(1,0){4}}%E_4
			 % [arxiv_v2: inline-PS \special stripped, 137 chars]%C_11
			 % [arxiv_v2: inline-PS \special stripped, 137 chars]%C_12
			% [arxiv_v2: inline-PS \special stripped, 136 chars]%C_13
			% [arxiv_v2: inline-PS \special stripped, 139 chars]%C_14
			% [arxiv_v2: inline-PS \special stripped, 140 chars]%C_21
			% [arxiv_v2: inline-PS \special stripped, 140 chars]%C_22
			% [arxiv_v2: inline-PS \special stripped, 139 chars]%C_23
			% [arxiv_v2: inline-PS \special stripped, 139 chars]%C_24
			% [arxiv_v2: inline-PS \special stripped, 140 chars]%C_31
			% [arxiv_v2: inline-PS \special stripped, 140 chars]%C_32
			% [arxiv_v2: inline-PS \special stripped, 139 chars]%C_33
			% [arxiv_v2: inline-PS \special stripped, 139 chars]%C_34
			% [arxiv_v2: inline-PS \special stripped, 140 chars]%C_41
			% [arxiv_v2: inline-PS \special stripped, 140 chars]%C_42
			% [arxiv_v2: inline-PS \special stripped, 139 chars]%C_43			
			% [arxiv_v2: inline-PS \special stripped, 139 chars]%C_44		                
			                
		\end{picture}%
		\caption{Divisors $D_1$ and $D_2$}
		\label{fig2}
	\end{figure}
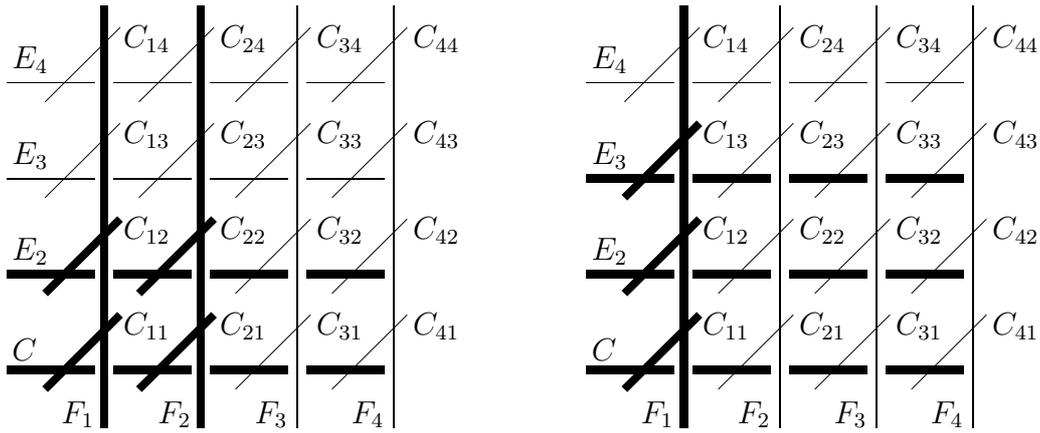

Denote by $\iota$ the inversion of the elliptic fibration $\Phi_{D_2}$ (with respect to the zero section $C_{31}$). Then $\iota \in \Aut (S)$ and $\iota$ is an involution of $S$ such that $\iota^*\omega_S = -\omega_S$.
Denote by $f_1$ the translation automorphism of the elliptic fibration $\Phi_{D_1}$ given by the section $C_{41}$ (with respect to the zero section $C_{31}$). Then $f_1 \in \Aut (S)$ and $f_1^*\omega_S = \omega_S$. 
We finally define
$$\iota_n := f_1^{-n} \circ \iota \circ f_1^{n}\,\, (\forall n \in \Z).$$

\begin{lemma}\label{iota} 
The automorphisms $\iota$, $f_1$ and $\iota_n$ satisfy the following properties:
\begin{enumerate}
\item The fixed locus $S^{\iota}$ is a disjoint union of three smooth rational curves $C_{11}$, $C_{31}$, $C_{34}$ and a smooth curve of genus $4$, with self-intersection number $6$. 
\item $\iota$ acts as a nontrivial involution on each of the smooth rational curves $C, F_1, F_3, E_4$. 
\item All $\iota, f_1$ and $\iota_n$ preserve $C$ and $C_{11}$. Moreover, $\iota_n$ fixes $C_{11}$ pointwisely and we have
$$\iota|_{C} (x) = 2-x\,\, ,\,\, f_1|_{C} (x) = 2x\,\, ,\,\, \iota_n|_{C}(x) = \frac{1}{2^{n-1}} -x.$$
\end{enumerate}
\end{lemma}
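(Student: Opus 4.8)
The plan is to run everything off two standard structural facts about a Jacobian elliptic fibration: the inversion preserves each fibre (acting trivially on the base) and acts on the group of components of each reducible fibre by negation, while the shape of the fixed locus along a reducible fibre is dictated by its Kodaira type. Here $\iota$ is the inversion of $\Phi_{D_2}$, whose fibre $D_2$ has type $IV^{*}$ ($\widetilde{E}_6$) with zero section $C_{31}$, and $f_1$ is the translation of $\Phi_{D_1}$ by $C_{41}$, the fibre $D_1$ having type $I_8$ ($\widetilde{A}_7$). A preliminary remark I would record first is that $\Phi_{D_2}$ has a \emph{second} $IV^{*}$ fibre
$$D_2'' := F_2 + 2C_{24} + F_3 + 2C_{34} + F_4 + 2C_{44} + 3E_4,$$
with central component $E_4$: indeed one checks $D_2''\cdot D_2=0$ and $D_2''\cdot C_{31}=1$, so $D_2''$ is a fibre disjoint from $D_2$. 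Since each $C_{ij}$ is the unique $(-2)$-curve meeting $F_i$ and $E_j$, any automorphism of the configuration is determined on all $C_{ij}$ once its effect on the $E_i,F_j$ is known; this is the bookkeeping device I will use repeatedly.

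For (1): as $\iota^{*}\omega_S=-\omega_S$, the locus $S^{\iota}$ is a disjoint union of smooth curves. The zero section $C_{31}$ is fixed (pointwise). The negation action on the $\Z/3$ component group of each $IV^{*}$ fibre, together with the local model of the inversion along a $IV^{*}$ fibre, shows that the multiplicity-$2$ component on the arm through the identity component is fixed pointwise while the identity and central components carry nontrivial involutions. For $D_2$ the identity component is $C=E_1$ (it meets $C_{31}$), so $C_{11}$ is fixed pointwise; for $D_2''$ the identity component is $F_3$, so $C_{34}$ is fixed pointwise. To exhaust the rational components and locate the remaining fixed curve $B$, I would compute $\chi_{\mathrm{top}}(S^{\iota})=2+\Tr(\iota^{*}\mid H^2(S,\R))$; the anti-symplectic involution $\iota$ has invariants $(r,a)=(10,4)$, whence $\chi_{\mathrm{top}}(S^{\iota})=0$. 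With the three rational curves in hand, $B$ is a single smooth curve with $(2-2g(B))+6=0$, so $g(B)=4$, and adjunction on the K3 surface $S$ gives $B^2=2g(B)-2=6$. The delicate point is exactly this last step — verifying $(r,a)=(10,4)$, equivalently that no further fixed component appears — and here I would lean on the explicit computation of $S^{\iota}$ in \cite{Og89}.

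For (2): since $\iota$ preserves every fibre and negates component groups, on $D_2$ it fixes the simple identity component $E_1$ and the central component $F_1$ and exchanges $E_2\leftrightarrow E_3$, while on $D_2''$ it fixes $F_3$ and $E_4$ and exchanges $F_2\leftrightarrow F_4$. By the characterisation of the $C_{ij}$ this forces $\iota(C_{12})=C_{13}$, $\iota(C_{32})=C_{33}$, $\iota(C_{21})=C_{41}$, and $\iota(C_{24})=C_{44}$. Each of $C,F_1,F_3,E_4$ is therefore $\iota$-invariant and carries a \emph{nontrivial} involution, because $\iota$ interchanges a pair of its marked points: on $C$ the points $C\cap C_{21}$ and $C\cap C_{41}$; on $F_1$ the points $F_1\cap C_{12}$ and $F_1\cap C_{13}$; on $F_3$ the points $F_3\cap C_{32}$ and $F_3\cap C_{33}$; on $E_4$ the points $E_4\cap C_{24}$ and $E_4\cap C_{44}$.

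For (3): the curves $C$ and $C_{11}$ are components of the $I_8$ fibre $D_1$, and since $C_{41}$ meets $D_1$ only along its identity component $C$, translation by $C_{41}$ acts trivially on the $\Z/8$ component group, so $f_1(C)=C$ and $f_1(C_{11})=C_{11}$; by the component analysis of (2) the same holds for $\iota$, hence for each $\iota_n=f_1^{-n}\circ\iota\circ f_1^{n}$. On the identity component $C^{\circ}\cong\G_m$ the group law has identity at $C\cap C_{31}=\{x=1\}$, so $x$ serves as a multiplicative coordinate; translation by $C_{41}$, which meets $C$ at $x=2$, is then $x\mapsto 2x$, giving $f_1|_C(x)=2x$. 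For $\iota|_C$: by (2) it is a nontrivial involution of $C\cong\P^1$, and by (1) it fixes $C\cap C_{11}=\{x=\infty\}$ as well as $C\cap C_{31}=\{x=1\}$; the unique nontrivial involution of $\P^1$ fixing $\infty$ and $1$ is $x\mapsto 2-x$, so $\iota|_C(x)=2-x$. Conjugating yields
$$\iota_n|_C(x)=f_1^{-n}\bigl(2-f_1^{n}(x)\bigr)=2^{-n}\bigl(2-2^{n}x\bigr)=\frac{1}{2^{n-1}}-x,$$
and, since $\iota|_{C_{11}}=\mathrm{id}$ by (1) while $f_1^{n}(C_{11})=C_{11}$, the conjugate $\iota_n|_{C_{11}}=f_1^{-n}\circ\mathrm{id}\circ f_1^{n}=\mathrm{id}$ fixes $C_{11}$ pointwise. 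The main obstacle throughout is the genus/self-intersection assertion in (1); parts (2) and (3) are then driven entirely by the negation action on the two $IV^{*}$ component groups.
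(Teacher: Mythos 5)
Your part (3) is sound and in fact more self-contained than the paper's own proof: the paper simply cites \cite[p.~957]{DO19} for the invariance of $C$ under $f_1$ and the formula $f_1|_C(x)=2x$, whereas you derive both from the trivial action of translation by $C_{41}$ on the $\Z/8$ component group of the $I_8$ fibre $D_1$ together with the multiplicative group structure on the identity component; the formulas for $\iota|_C$ and $\iota_n|_C$ are then obtained exactly as in the paper. Part (1) rests on a citation in both treatments (the paper invokes \cite[Lemmas 4.2 and 4.3]{DO19}, you lean on \cite{Og89}); note only that your Lefschetz computation is not an independent proof, since taking $(r,a)=(10,4)$ as input is tantamount to already knowing the fixed locus --- which you acknowledge.

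The genuine gap is in part (2), in your ``bookkeeping device'': the assertion that each $C_{ij}$ is the \emph{unique} $(-2)$-curve on $S$ meeting $F_i$ and $E_j$. This is stated without proof and is far from obvious: $S$ has N\'eron--Severi rank $18$ and carries infinitely many $(-2)$-curves (for instance the infinitely many sections of $\Phi_{D_1}$ produced by iterating the infinite-order translation $f_1$), so a uniqueness statement ranging over \emph{all} $(-2)$-curves would need a real argument. Where you apply the principle to components of the two $\iota$-invariant $IV^*$ fibres (giving $\iota(C_{12})=C_{13}$ and $\iota(C_{24})=C_{44}$) the conclusion is legitimate, because there it suffices to check uniqueness among the components of the invariant fibre itself. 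But $C_{21},C_{41},C_{32},C_{33}$ are \emph{sections} of $\Phi_{D_2}$, not fibre components, so nothing you have established forces $\iota(C_{21})=C_{41}$ or $\iota(C_{32})=C_{33}$; proving those identities would amount to identifying inverses in the Mordell--Weil group. These two unproved identities are precisely what you use to get nontriviality of $\iota|_C$ and $\iota|_{F_3}$. The repair is immediate and is the route the paper takes: once invariance is known, nontriviality on all four curves follows from (1), since none of $C,F_1,F_3,E_4$ is a component of $S^\iota=C_{11}\sqcup C_{31}\sqcup C_{34}\sqcup\Sigma_0$ (alternatively, the inversion restricts to the group negation on the identity component $\G_a$ of each $IV^*$ fibre, which is nontrivial). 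With that substitution your argument goes through.
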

\begin{proof}
(1) This assertion  is proved by \cite[Lemmas 4.2 and 4.3]{DO19}. 

\smallskip

(2) Recall that $\iota$ is the inversion of the elliptic fibration $\Phi_{D_2}$ with respect to the zero section $C_{31}$ and $D_2$ is a fiber of this fibration. 
It follows that $D_2$ is invariant by $\iota$. Since $F_1$ is the only component with coefficient 3 in $D_2$, it is invariant by $\iota$. The component $C$ contains a fixed point which is its intersection with $C_{31}$, see (1). It is invariant as well.

Recall that the divisor
$$D_2^\prime := F_3 + 2C_{34} + F_2 + 2C_{24} + F_{4} + 2C_{44} + 3E_{4}$$
is also a fiber of $\Phi_{D_2}$, see e.g. \cite[Page 958]{DO19}. Arguing as above, we obtain that $E_4$ and $F_3$ are invariant by $\iota$.
Finally, by (1), the restriction of $\iota$ to each of the curves $C, F_1, F_3, E_4$ is a nontrivial involution. 

\smallskip

(3) We have seen that $\iota$ preserves $C$ and $C_{11}$.  The invariance of $C$  by $f_1$, as well as 
the second formula in (3), are proved in \cite[Page 957]{DO19}. It follows that the point of intersection between the two components $C$ and $C_{11}$ of $D_1$ is fixed by $f_1$. As a fiber of $\varphi_{D_1}$, $D_1$ is invariant by $f_1$. Thus, $C_{11}$ is also invariant by $f_1$. By the definition of $\iota_n$, we easily deduce that $C$ (resp. $C_{11}$) is  invariant (resp. fixed pointwisely) by $\iota_n$.

Finally, in the affine coordinate $x$ of $C$, $\iota|_C$ is a nontrivial involution which fixes the points $C\cap C_{11}=\infty$ and 
$C \cap C_{31} = 1$. Thus, $\iota|_C(x) = 2-x$ as claimed in the first formula in (3).  
The last formula follows from the first two formulas.  
\end{proof}

Let us consider the inertia group 
$$\Ine (S, C_{11})=\{g\in \Aut (S, C_{11})\, |\,  g|_{C_{11}}=\id_{C_{11}}\}$$
and its two subgroups
$$\Ine^{\pm}(S,C_{11}) := \{ g \in \Ine (S,C_{11})| \, g^*(\omega_S)= \pm \omega_S\},$$
$$\Ine^s (S,C_{11}):=\{g\in\Ine (S,C_{11})| \, g^*(\omega_S)=\omega_S\}.$$
As before, $\omega_S$ is a non-zero global holomorphic $2$-form on $S$. 

The next lemma will be also used in Sections \ref{sect3} and \ref{sect4}.

\begin{lemma}\label{ine} We have the following statements:
\begin{enumerate}
\item Let $f \in \Ine (S,C_{11})$. Then $f \in \Aut (S,C,P)$ and we have a representation 
$$\tau : \Ine (S,C_{11}) \to \Aut (C,P)\,\, ;\,\, f \mapsto f|_C.$$
The image $\tau(\Ine^s (S,C_{11}))$ of $\Ine^s (S,C_{11})$ is an abelian group.
\item The automorphisms $\iota_n$ define infinitely many conjugacy classes in $\Ine^{\pm} (S, C_{11})$.
\end{enumerate}
\end{lemma}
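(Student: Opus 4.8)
The membership is immediate: by Lemma \ref{iota}(3) each $\iota_n$ fixes $C_{11}$ pointwisely, and since $f_1^*\omega_S=\omega_S$ and $\iota^*\omega_S=-\omega_S$ we get $\iota_n^*\omega_S=(f_1^{n})^*\iota^*\omega_S=-\omega_S$, so $\iota_n\in\Ine^-(S,C_{11})\subseteq\Ine^{\pm}(S,C_{11})$. The $\iota_n$ are pairwise distinct (their restrictions $2^{1-n}-x$ to $C$ are), so to prove (2) it suffices to show that each conjugacy class of $\Ine^{\pm}(S,C_{11})$ contains only finitely many of them; I will in fact aim to show that $\iota_n$ and $\iota_m$ are conjugate in $\Ine^{\pm}(S,C_{11})$ only for finitely many pairs with $n\ne m$.

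A first, cheap invariant is the restriction $\tau$ of part (1): if $g\iota_ng^{-1}=\iota_m$ with $g\in\Ine^{\pm}(S,C_{11})$ then $\tau(g)\tau(\iota_n)\tau(g)^{-1}=\tau(\iota_m)$ in $\Aut(C,P)$, and by Lemma \ref{iota}(3) one has $\tau(\iota_n)(x)=2^{1-n}-x$. This is not enough: the products $\iota_n\circ\iota$ lie in $\Ine^s(S,C_{11})$ with $\tau(\iota_n\iota)(x)=x+(2^{1-n}-2)$, so $\tau\big(\Ine^s(S,C_{11})\big)$ already contains all translations by $\Z[1/2]$, and one checks that the $\tau(\iota_n)$ are then mutually conjugate inside $\tau\big(\Ine^{\pm}(S,C_{11})\big)$. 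Hence the classes cannot be separated by the restriction to $C$ alone; the separation must come from the way each $\iota_n$ sits on $S$.

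The geometric input I would use is that $\iota_n$ is the inversion of the elliptic fibration $\Phi_n$ with fibre class $\psi_n:=(f_1^{n})^*[D_2]$ and zero-section $f_1^{-n}(C_{31})$, of class $\sigma_n:=(f_1^{n})^*[C_{31}]$; thus $\iota_n$ determines and is determined by the pair $(\psi_n,\sigma_n)$. Because $f_1$ is an infinite-order parabolic automorphism (a fibrewise translation of $\Phi_{D_1}$) and neither $[D_2]$ nor $[C_{31}]$ is $f_1$-invariant, the classes $\psi_n,\sigma_n$ leave every bounded region of $\NS(S)$ as $n\to\pm\infty$. If $g\iota_ng^{-1}=\iota_m$ with $g\in\Ine^{\pm}(S,C_{11})$, then $g$ carries $\Phi_n$ to $\Phi_m$, whence $g_*\psi_n=\psi_m$ and $g_*\sigma_n=\sigma_m$ (the latter possibly after exchanging the two rational fixed sections $f_1^{-n}(C_{31})$, $f_1^{-n}(C_{34})$ of $\iota_n$, cf.\ Lemma \ref{iota}(1)). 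Moreover $g$ fixes $C_{11}$ pointwisely, hence fixes $P=C\cap C_{11}$ and $P_1=F_1\cap C_{11}$; since by Lemma \ref{theta}(1) $g$ preserves $\cup_i(E_i\cup F_i)$ and $C=E_1$, $F_1$ are the only members of that configuration through $P$, $P_1$ (Figure \ref{fig1}), we get $g(C)=C$, $g(F_1)=F_1$. So $g_*$ is an isometry of $\NS(S)$ fixing $[C],[C_{11}],[F_1]$ while moving the unbounded vectors $\psi_n\mapsto\psi_m$, $\sigma_n\mapsto\sigma_m$, and the goal is to show this forces $n=m$ outside a finite set of pairs.

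The main obstacle is exactly that the three classes I can guarantee $g_*$ fixes, namely $[C],[C_{11}],[F_1]$, are themselves $f_1$-invariant (as $f_1$ preserves $C$, $C_{11}$, $F_1$); consequently $\langle\psi_n,[C]\rangle=\langle[D_2],[C]\rangle$ and likewise for $[C_{11}],[F_1]$ and for $\sigma_n$ are all independent of $n$, so these fixed classes detect none of the growth of $\psi_n,\sigma_n$. To break this I would have to produce a conjugation-stable quantity transverse to the $f_1$-fixed subspace of $\NS(S)$: concretely, I would also impose that $g$ matches the genus-$4$ fixed curve of $\iota_n$ with that of $\iota_m$ and matches the two rational fixed sections, thereby over-determining $g_*$ on the whole orthogonal complement $\langle[C],[C_{11}],[F_1]\rangle^{\perp}$, and then combine this with a degree estimate against a fixed ample class and the genericity of $\lambda$ (which makes $E$ and $F$ non-isogenous and excludes unexpected isometries of $\NS(S)$) to conclude $n=m$. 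This lattice-theoretic rigidity step, where the uncountability of the base field enters, is the heart of the argument.
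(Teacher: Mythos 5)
Your proposal contains a genuine gap: it is a strategy outline, not a proof, and it stops exactly where the difficulty begins. For part (2), your reductions are correct as far as they go --- the membership $\iota_n\in\Ine^{\pm}(S,C_{11})$, the observation that the restriction homomorphism $\tau$ cannot separate the classes (since $\tau(\Ine^s(S,C_{11}))$ contains all dyadic translations, the involutions $x\mapsto 2^{1-n}-x$ become mutually conjugate in the image), and the observation that the classes $[C],[C_{11}],[F_1]$ you can force $g_*$ to fix are $f_1$-invariant and hence blind to the growth of $\psi_n,\sigma_n$. But the proposed remedy --- matching the genus-$4$ fixed curves and the rational fixed sections, ``over-determining'' $g_*$ on $\langle[C],[C_{11}],[F_1]\rangle^{\perp}$, a degree estimate against an ample class, and the genericity of $\lambda$ --- is never formulated as a precise claim, let alone proved. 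You yourself call this ``the heart of the argument,'' and it is: as written, nothing in your proposal excludes the possibility that \emph{all} the $\iota_n$ are conjugate in $\Ine^{\pm}(S,C_{11})$. A secondary gap: part (1) of the lemma (that $\Ine(S,C_{11})\subset\Aut(S,C,P)$, and that $\tau(\Ine^s(S,C_{11}))$ is abelian) is used as an input but never proved; the paper derives it from a differential computation at $P$: for $f\in\Ine^s(S,C_{11})$, $df|_{T_{S,P}}$ is diagonal with respect to $T_{C_{11},P}\oplus T_{C,P}$, equal to $1$ on the first factor and of determinant $1$, hence is the identity, so $f|_C$ is a translation. (Also, the uncountability of $\R$ and $\C$ plays no role in this lemma; it enters only later, in the choice of the generic point $Q$.)

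For comparison, the paper does not attempt any N\'eron--Severi rigidity on $S$. Instead it extends the differential argument of part (1): every $f\in\Ine^{\pm}(S,C_{11})$ has $df|_{T_{S,P}}$ diagonal with entries $(1,\pm 1)$, so $f$ acts on $\P(T_{S,P})$ either trivially (if $f^*\omega_S=\omega_S$) or as the \emph{single fixed} involution $\iota|_{\P(T_{S,P})}$ (if $f^*\omega_S=-\omega_S$). Consequently the whole group $\Ine^{\pm}(S,C_{11})$ lifts to $\Aut(S_2)$, where $S_1\to S$ is the blow-up at $P$ and $S_2\to S_1$ the blow-up at a finite $\iota|_{\P(T_{S,P})}$-stable subset $\sQ$ of the exceptional curve. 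The conclusion is then obtained by citing \cite[Lemma 4.5]{DO19}: for a suitable choice of $\sQ$, in \emph{any} subgroup of $\Aut(S_2)$ containing the $\iota_n$, these involutions fall into infinitely many conjugacy classes. In other words, the rigidity step you left open is precisely the content of that cited lemma; the only new work in the paper's proof is the lifting statement. To complete your proposal along your own lines you would, in effect, have to reprove \cite[Lemma 4.5]{DO19} from scratch, which is a substantially harder task than the reduction you carried out.
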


\begin{proof} (1) Since $P \in C_{11}$, we have $f(P) =P$ for $f \in \Ine (S,C_{11})$, that is, $f \in \Aut (S, P)$ if $f \in \Ine (S,C_{11})$. Since $\Aut (S, P) = \Aut (S, C, P)$ by Lemma \ref{theta}(2), it follows that $f \in \Aut (S,C,P)$ and we have the representation $\tau$. 

We show that $\tau(\Ine^s (S,C_{11}))$ is an abelian group. Let $f \in \Ine^s (S,C_{11})$. Since $f^*\omega_S = \omega_S$ with $f(P) = P$, we have $\det df|_{T_{S, P}} = 1$ for the differential map $df|_{T_{S, P}} : T_{S, P} \to T_{S, P}$. Since $df|_{T_{S, P}}$ preserves the transversal lines $T_{C_{11}, P}$ and $T_{C, P}$ in $T_{S, P}$, as $f$ preserves both $C$ and $C_{11}$. Moreover, $df|_{T_{S, P}}$ is identity on $T_{C_{11}, P}$ as $f \in \Ine (S,C_{11})$. Thus, $df|_{T_{S, P}}$ is also identity on $T_{C, P}$ as $\det df|_{T_{S, P}} =1$. Hence, $f|_C$ is of the form $f|_{C}(x) = x + a$ ($a \in \C$). Therefore, $\tau(\Ine^s (S,C_{11}))$ is a subgroup of the additive group $(\C, +)$. In particular, $\tau(\Ine^s (S,C_{11}))$ is an abelian group. This proves (1). 

\smallskip

(2) By Lemma \ref{iota}(1)(3) and the definition of $\iota_n$, we have $\iota_{n} \in \Ine (S, C_{11})$ for all integers $n$.
On the other hand, by the definitions of $\iota$ and $f_1$, we have $\iota^*\omega_S = -\omega_S$ and $f_1^*\omega_S = \omega_S$. It follows that 
$\iota_n^*\omega_S = -\omega_S$. Therefore, $\iota_{n} \in \Ine^{\pm} (S, C_{11})$ for all integers $n$.

Now, let $S_1$ be the blow up of $S$ at $P$ and $S_2$ the blow up of $S_1$ at a finite $\iota|_{\P(T_{S, P})}$-stable subset $\sQ$ of the exceptional curve $\P(T_{S, P}) \subset S_1$. For the same argument as in (1), we deduce that $f \in \Ine^{\pm} (S, C_{11})$ 
acts on $\P(T_{S, P})$ and the action is identity if $f^*\omega_S = \omega_S$, while the action coincides with $\iota|_{\P(T_{S, P})}$ if $f^*\omega_S = -\omega_S$.
So, $\Ine^{\pm} (S, C_{11})$ naturally lifts to a subgroup of $\Aut (S_2)$.
By  \cite[Lemma 4.5]{DO19}, for a suitable choice of $\sQ$, in any subgroup of $\Aut(S_2)$ which contains the involutions $\iota_n$, the number of different conjugacy classes of $\iota_n$ is infinite. 
The assertion (2) follows easily.
\end{proof}

We close this section by the following general lemma, which will be also used in the next two sections.

\begin{lemma}\label{specialpoint}
Let $V$ be a smooth complex projective surface. Let $D$ be an irreducible curve in $V$ such that $(D^2)_V < 0$. Then there exists a countable subset $A\subset D$ such that the following statement holds: if $Q\in D\setminus A$, $g\in \Aut (V)$ and $g(Q)\in  D$, then $g\in \Aut (V,D)$;
in particular, for any $Q\in D\setminus A$, we have $\Aut(V, Q) = \Aut(V, D, Q)$.

Moreover, this holds for any smooth rational curve $D$ on any complex projective K3 surface $V$.
\end{lemma}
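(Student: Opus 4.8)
The plan is to reduce the statement to two standard facts about an irreducible curve of negative self-intersection on a surface: that the $\Aut (V)$-orbit of the class $[D]$ in $\NS (V)$ is countable, and that a numerical class with negative self-intersection contains at most one irreducible curve. For the uniqueness principle I would argue as follows. If $D'$ is an irreducible curve with $[D'] = [D]$ in $\NS (V)$, then $(D' \cdot D)_V = (D^2)_V < 0$, since intersection numbers depend only on numerical classes; as two distinct irreducible curves meet non-negatively, this forces $D' = D$. More generally, for any class $c \in \NS (V)$ with $c^2 < 0$ there is at most one irreducible curve in the class $c$.

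Next I would exploit the finiteness of the N\'eron--Severi group. Since $\NS (V)$ is a finitely generated abelian group it is countable, so the orbit $\mathcal{O} := \{ g_* [D] \mid g \in \Aut (V) \} \subset \NS (V)$ is countable. For $g \in \Aut (V)$ the image $g(D)$ is an irreducible curve with $(g(D)^2)_V = (D^2)_V < 0$ and class $g_*[D]$, so by the uniqueness principle $g(D)$ depends only on the class $g_*[D]$. Hence the curves $\{ g(D) \}_g$ form a countable family $\{ D_c \}_{c \in \mathcal{O}}$, where $D_c$ denotes the unique irreducible curve in the class $c$, with $D_{[D]} = D$. For each $c \in \mathcal{O}$ with $c \neq [D]$ the curve $D_c$ is distinct from $D$, so $D_c \cap D$ is finite. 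I would then set
$$A := \bigcup_{c \in \mathcal{O},\ c \neq [D]} (D_c \cap D),$$
a countable union of finite sets, hence a countable subset of $D$.

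Finally I would verify the stated property. Suppose $Q \in D \setminus A$, $g \in \Aut (V)$ and $g(Q) \in D$, but $g(D) \neq D$. Then $g^{-1}(D)$ is an irreducible curve distinct from $D$ whose class $c' := (g^{-1})_* [D]$ lies in $\mathcal{O}$ and satisfies $c' \neq [D]$, so $g^{-1}(D) = D_{c'}$. From $g(Q) \in D$ we get $Q \in g^{-1}(D) \cap D = D_{c'} \cap D \subset A$, contradicting $Q \notin A$. Therefore $g(D) = D$, i.e. $g \in \Aut (V, D)$; the ``in particular'' statement is then immediate, since $g \in \Aut (V, Q)$ gives $g(Q) = Q \in D$ and hence $g \in \Aut (V, D, Q)$, the reverse inclusion being trivial. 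For the last assertion, a smooth rational curve $D$ on a K3 surface satisfies $(D^2)_V = -2 < 0$ by adjunction (as $K_V = 0$), so it is covered by the general statement.

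The only delicate point --- the main obstacle, such as it is --- is the bookkeeping of directions: $A$ must be assembled from the intersections $D_c \cap D$ so that it is the source point $Q$, rather than its image $g(Q)$, that gets trapped in $A$. This is why the argument is phrased through $g^{-1}(D)$ rather than $g(D)$. Everything else is a routine application of the negativity of $(D^2)_V$ and the countability of $\NS (V)$.
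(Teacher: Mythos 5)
Your proof is correct and follows essentially the same route as the paper: both rest on the countability of $\NS(V)$, the fact that a numerical class of negative self-intersection contains at most one irreducible curve, and the trick of passing to $g^{-1}(D)$ so that the source point $Q$ itself lands in the bad set $A$. The only (immaterial) difference is that the paper takes $A$ to be the union of $D \cap R$ over \emph{all} irreducible curves $R \neq D$ with $(R^2)_V < 0$, whereas you restrict to the curves in the $\Aut(V)$-orbit of $[D]$; both families are countable and either choice makes the argument go through.
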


\begin{proof}
Let 
$$\sA:=\big\{R\, |\, R\subset V \text{ is an irreducible curve such that } R\neq D \,\, {\rm and}\,\, (R^2)_V  < 0\big\},$$ 
and let  
$$A :=\cup_{R\in \sA}(D\cap R).$$  
Note that the N\'eron-Severi group ${\rm NS} (V)$ is countable. Moreover, for any element $\alpha\in {\rm NS} (V)$ with $(\alpha^2)_V < 0$, there exists at most one irreducible curve $R$ such that $[R]=\alpha$. Thus, $\sA$ is a countable set. Then $A$ is also countable. Let $Q\in D\setminus A$. Let $g\in \Aut(V)$ with $g(Q)\in D$. Then $g^{\pm 1}(D)$ is an irreducible curve and $(g^{\pm 1}(D)^2)_V = (D^2)_V < 0$.

To complete the proof, it suffices to show $g(D)=D$. Suppose otherwise $g(D)\neq D$. Note that $g(Q)\in D\cap g(D)$ and $Q\in g^{-1}(D)\cap D$. Since $g^{-1}(D)\in \sA$, it follows that $Q\in A$, a contradiction. This proves the first statement. The second one is obvious and the last one is also clear 
because $(D^2)_V = -2$ for a smooth rational curve $D$ on a K3 surface $V$.
\end{proof}

%%%%%%%%%%%%
\section{Proof of Theorem \ref{MainIntro}}\label{sect3}

In this section, we prove Theorem \ref{MainIntro}. 
We use the same notation as in Section \ref{sect2}. For instance, 
$S = {\rm Km}\, (E \times F)$, $P \in C \subset S$, $C_{ij} \subset S$ 
are the same as in Section \ref{sect2}, $\theta$, $\iota$ and $\iota_n$ 
are the involutions of $S$, and $\pi : S \to T$ 
is the quotient morphism by $\theta$ and so on. 

From now, we often denote the object on $T$ induced from an object $W$ on $S$ by $W_{T}$. For instance the curve $\pi(C_{11})$ on $T$ is denoted by $C_{11, T}$ and the automorphism of $T$ induced by $f \in \Aut (S)$ under the isomorphism 
in Lemma \ref{theta}(3) by $f_T$.   

\medskip

Recall that by Lemma \ref{ine}(2), $\iota_n \in \Ine^{\pm} (S, C_{11})$ as an automorphism of $S$.

\begin{lemma}\label{infiniteconj}
Let $m\ge 3$ and let $Q_1,\ldots,Q_m$ be $m$ distinct points in $C_{11}$. Then, the number of conjugacy classes of the involutions $\iota_n$ $(n \in \Z)$ in $\Aut (S, C_{11}, \{Q_1,\ldots,Q_m\})$ is infinite.
%Then the involutions $\iota_n$ define infinitely many conjugacy classes 
%in $\Aut (S, C_{11}, \{Q_1,\ldots,Q_m\})$.
\end{lemma}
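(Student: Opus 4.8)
The plan is to compare conjugacy in the group $G := \Aut(S, C_{11}, \{Q_1,\ldots,Q_m\})$ with conjugacy in the much smaller group $\Ine^{\pm}(S,C_{11})$, and to feed in Lemma \ref{ine}(2), which already supplies infinitely many conjugacy classes of the $\iota_n$ in $\Ine^{\pm}(S,C_{11})$. First I would check that each $\iota_n$ actually lies in $G$: by Lemma \ref{iota}(3) the involution $\iota_n$ preserves $C_{11}$ and fixes it pointwise, so it fixes each $Q_i$ and hence preserves the set $\{Q_1,\ldots,Q_m\}$. Since moreover $\iota_n^*\omega_S = -\omega_S$, we in fact have $\iota_n \in \Ine^{\pm}(S,C_{11}) \subseteq \Ine(S,C_{11}) \subseteq G$. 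Thus the comparison makes sense, and the whole argument reduces to a purely group-theoretic statement once the correct finiteness inputs are in place.

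Next I would introduce the restriction homomorphism $\rho : G \to \Aut(C_{11}, \{Q_1,\ldots,Q_m\})$, $g \mapsto g|_{C_{11}}$. Because $C_{11} \simeq \P^1$ and $m \ge 3$, the target is \emph{finite}: a M\"obius transformation permuting $m \ge 3$ fixed points lies in a finite subgroup of $\mathrm{PGL}_2$, since the subgroup fixing three of the points is trivial. The kernel of $\rho$ is exactly $\Ine(S,C_{11})$, which is therefore a normal subgroup of $G$ of finite index. I would then invoke the character $\chi : \Aut(S) \to \C^*$ defined by $g^*\omega_S = \chi(g)\,\omega_S$; for a K3 surface its image is a finite cyclic group, so $\Ine^{\pm}(S,C_{11}) = \{g \in \Ine(S,C_{11}) : \chi(g) = \pm 1\}$ has finite index in $\Ine(S,C_{11})$. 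As $\C^*$ is abelian and $\Ine(S,C_{11})$ is normal in $G$, the subgroup $\Ine^{\pm}(S,C_{11})$ is again normal in $G$, and by multiplicativity of indices it has finite index in $G$.

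Finally I would apply an elementary fact: if $H \trianglelefteq G$ has finite index $d$ and $h \in H$, then the $G$-conjugacy class of $h$ is contained in $H$ and splits into at most $d$ classes for $H$-conjugation, because $G/H$ acts transitively on the set of $H$-classes forming a single $G$-class. Taking $H = \Ine^{\pm}(S,C_{11})$, the natural map from $\Ine^{\pm}(S,C_{11})$-conjugacy classes of the $\iota_n$ to their $G$-conjugacy classes has all fibers of size at most $[G : \Ine^{\pm}(S,C_{11})] < \infty$; since the source is infinite by Lemma \ref{ine}(2), the target is infinite, which is precisely the assertion. The only genuine work, and the step most in need of care, is establishing the two finiteness inputs — the finiteness of $\rho(G)$, which is exactly where the hypothesis $m \ge 3$ is used, and the finiteness of the image of $\chi$ for the K3 surface $S$ — together with verifying that $\Ine^{\pm}(S,C_{11})$ is truly normal in $G$, so that the clean orbit-counting bound applies.
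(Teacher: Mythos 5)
Your proposal is correct and follows essentially the same route as the paper: use $m\ge 3$ and $C_{11}\simeq\P^1$ to get $[\Aut(S,C_{11},\{Q_1,\ldots,Q_m\}):\Ine(S,C_{11})]<\infty$, use finiteness of the image of the canonical character $g\mapsto\chi(g)$ (the paper cites \cite[Theorem 14.10]{Ue75}) to get $[\Ine(S,C_{11}):\Ine^{\pm}(S,C_{11})]<\infty$, and then transfer the infinitude of conjugacy classes from Lemma \ref{ine}(2) up through the finite-index inclusions. The only cosmetic difference is that you justify the transfer step via normality of $\Ine^{\pm}(S,C_{11})$ and the $G/H$-action on $H$-classes, whereas the paper leaves this group-theoretic fact implicit (it also holds without normality, by counting double cosets $H\backslash G/C_G(h)$); both are valid.
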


\begin{proof}
Since $C_{11}\cong \P^1$ and $m\ge 3$, it follows that $$[\Aut (S,C_{11}, \{Q_1,..,Q_m\}): \Ine(S,C_{11})]<\infty.$$ 
Thus, it suffices to show that the involutions $\iota_n$ define infinitely many conjugacy classes in a subgroup of $\Ine (S,C_{11})$ of finite index. However, $\Ine^{\pm} (S, C_{11})$ is such a subgroup by Lemma \ref{ine}(2). Note that $\Ine^{\pm} (S, C_{11})$ is a finite index subgroup of $\Ine(S,C_{11})$ as the canonical representation of $\Aut (S)$ has a finite image (see e.g. \cite[Theorem 14.10]{Ue75}). 
\end{proof}

We now apply Lemma \ref{specialpoint} for the pair $(V, D) = (S, C_{11})$. As in the proof of that lemma, define
$$\sA:=\big\{R\, |\, R\subset S \text{ is an irreducible curve such that } R\neq C_{11} \,\, {\rm and}\,\, (R^2)_S  < 0\big\},$$ 
and consider the countable set
$$A :=\cup_{R\in \sA}(C_{11}\cap R).$$
Recall that $C_{11}$ is defined over $\R$ and $C_{11}(\R)$ is an uncountable set. We say that a point $Q\in C_{11}(\R)$ is {\it generic} if 
$Q\not \in A\cup\{P,P_1\}$.

\begin{lemma}\label{pointsinC11}
Let $Q\in C_{11}(\R)$ be a generic point. Then
$$\Aut (S, \{Q,\theta(Q)\})=\Aut (S, C_{11}, \{Q,\theta(Q)\})=\Aut(S, C_{11}, \{P, P_1, Q,\theta(Q)\})$$
and the involution $\theta$ belongs to this group.
\end{lemma}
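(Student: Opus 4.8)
The plan is to deduce both equalities from a single structural observation, namely that \emph{every} automorphism of $S$ preserving $C_{11}$ must preserve the pair $\{P, P_1\}$. Before invoking this, I would record the relevant geometry of $\theta$ on $C_{11}$. By Lemma \ref{theta}(1) we have $\theta(C_{11}) = C_{11}$, while $C_{11}$ is not a component of $S^\theta = \cup_{i=1}^4 (E_i \cup F_i)$; hence $\theta|_{C_{11}}$ is a \emph{nontrivial} involution of $C_{11} \cong \P^1$, and its two fixed points are exactly $C_{11} \cap S^\theta = \{P, P_1\}$, since among the $E_i, F_i$ the curve $C_{11}$ meets only $C = E_1$ (at $P$) and $F_1$ (at $P_1$), see Figure \ref{fig1}. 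In particular $\theta(Q) \in C_{11}$, and since $Q$ is generic (so $Q \ne P, P_1$) the four points $P, P_1, Q, \theta(Q)$ are pairwise distinct, with $\{P, P_1\}$ and $\{Q, \theta(Q)\}$ disjoint.

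For the first equality, the inclusion $\supseteq$ is immediate because the right-hand group is cut out by more conditions. For $\subseteq$, I would take $g \in \Aut(S, \{Q, \theta(Q)\})$; then $g(Q) \in \{Q, \theta(Q)\} \subset C_{11}$, and since $Q \in C_{11} \setminus A$, Lemma \ref{specialpoint} applied to $(V, D) = (S, C_{11})$ forces $g(C_{11}) = C_{11}$, whence $g \in \Aut(S, C_{11}, \{Q, \theta(Q)\})$.

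For the second equality I would first prove the structural observation: if $g \in \Aut(S, C_{11})$, then $g$ commutes with $\theta$ by Lemma \ref{theta}(1), so $g|_{C_{11}}$ commutes with $\theta|_{C_{11}}$ and therefore permutes the fixed-point set $\{P, P_1\}$ of $\theta|_{C_{11}}$; concretely $\theta(g(P)) = g(\theta(P)) = g(P)$ exhibits $g(P)$ as a $\theta$-fixed point on $C_{11}$, so $g(P) \in \{P, P_1\}$, and likewise $g(P_1) \in \{P, P_1\}$. Thus every $g \in \Aut(S, C_{11})$ satisfies $g(\{P, P_1\}) = \{P, P_1\}$. Since $\{P, P_1, Q, \theta(Q)\}$ is the disjoint union of $\{P, P_1\}$ and $\{Q, \theta(Q)\}$, for such $g$ the condition $g(\{Q, \theta(Q)\}) = \{Q, \theta(Q)\}$ is equivalent to $g(\{P, P_1, Q, \theta(Q)\}) = \{P, P_1, Q, \theta(Q)\}$, which gives both inclusions of the second equality at once.

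Finally, $\theta$ itself lies in the smallest group: $\theta(C_{11}) = C_{11}$ by Lemma \ref{theta}(1), $\theta$ fixes $P$ and $P_1$ as points of $S^\theta$, and $\theta(\{Q, \theta(Q)\}) = \{\theta(Q), Q\}$ since $\theta^2 = \id$. The only step with genuine content is the structural observation that commutativity with $\theta$ pins down $\{P, P_1\}$; identifying $\{P, P_1\}$ with $C_{11} \cap S^\theta$ from the configuration in Figure \ref{fig1} and then reading off the equivalence of the set-stabilizing conditions is the delicate part, while everything else is formal once the genericity of $Q$ and Lemma \ref{specialpoint} are in hand.
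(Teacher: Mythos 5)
Your proof is correct and follows essentially the same route as the paper: the first equality via Lemma \ref{specialpoint} applied to $(S, C_{11})$ with the generic point $Q$, and the second via Lemma \ref{theta}(1) together with the configuration fact that $S^{\theta}$ meets $C_{11}$ exactly in $\{P, P_1\}$ (you phrase this as $g$ permuting the fixed points of $\theta|_{C_{11}}$, which is the same argument). Your write-up merely spells out details the paper leaves implicit, such as the pairwise distinctness of $P, P_1, Q, \theta(Q)$.
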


\begin{proof}
Observe that $A=\theta(A)$ by Lemma \ref{theta}(1). It follows that $\theta(Q)$ is also a generic point, i.e. outside $A\cup\{P,P_1\}$.
The first identity is a consequence of Lemma \ref{specialpoint}.
The second one is a consequence of Lemma \ref{theta}(1) because the union of 8 curves $S^{\theta} = \cup_{i=1}^{4} (E_i \cup F_i)$ intersects $C_{11}$ exactly at $P$ and $P_1$.
The assertion on $\theta$ is also clear because $\{Q,\theta(Q)\}$ is invariant by $\theta$. Note that the lemma still holds if we replace $\{Q,\theta(Q)\}$ by any non-empty set of generic points which is invariant by $\theta$.
\end{proof}

 {\it From now until the end of this section, we assume that $Q\in C_{11}(\R)$ as in the last lemma.}  

\medskip

Let $S_{Q} \to S$ be the blow up at $Q$ and $\theta(Q)$. Then $\theta$ lifts to an automorphism of $S_{Q}$ and we have the quotient morphism 
$$\pi^\prime : S_{Q} \to X := X_{Q_{T}} := S_{Q}/\langle \theta \rangle.$$
Then $X$ is the same surface in Theorem \ref{MainIntro} except that the choice $Q_{T} = \pi(Q) \in T$ is not yet specified. We have natural morphisms
$$X \to T \to \P = \P^1 \times \P^1,$$
where $T \to \P$ is the blow up at $16$ points $(a, b)$ as in Section \ref{sect2} and $X \to T$ is the blow up of $T$ at $Q_{T} := \pi(Q)$. 
Observe the following lemma.

\begin{lemma}\label{Picard} The surface $X$ satisfies the following statements:
\begin{enumerate}
\item 
$${\rm NS}\,(X) \simeq {\rm Pic}\, X = \Z [H_{1}] \oplus \Z [H_{2}] \oplus \oplus_{i,j} \Z [C_{ij, X}] \oplus \Z [E_{Q_T}],$$
where letting $H_i$ ($i = 1$, $2$) be the two general rulings of $\P$, which are defined over $\R$, we denote the pullback of $H_i$ on $X$ by the same letter $H_{i}$, the curve $C_{ij, X}$ is the proper transform of the curve $C_{ij, T} = \pi(C_{ij})$ on $X$ and $E_{Q_{T}}$ is the exceptional divisor of the blow up $X \to T$ at $Q_{T} = \pi(Q)$.

\item The natural representation 
$$\rho : \Aut (X) \to \Aut ({\rm NS}\,(X)) = \Aut ({\rm Pic}\,(X))$$
is injective. 
\end{enumerate}
\end{lemma}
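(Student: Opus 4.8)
The plan is to establish the two assertions by essentially formal means: \eqref{Picard}(1) from the structure of $X$ as an iterated blow-up of $\P=\P^1\times\P^1$, and \eqref{Picard}(2) by pushing an automorphism acting trivially on N\'eron--Severi all the way down to $\P$, where the rigidity of $\mathrm{PGL}_2$ will finish the argument.

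For (1), I would first note that $X$ is a smooth rational surface, so the exponential sequence gives $H^1(X,\sO_X)=H^2(X,\sO_X)=0$ and hence $\Pic\,(X)\xrightarrow{\sim}\NS\,(X)$, a free abelian group. The surface $X$ is obtained from $\P$ by blowing up the $16$ points $(a,b)$ (yielding $T$ with exceptional curves $C_{ij,T}$) and then blowing up $Q_T\in C_{11,T}$ (yielding $E_{Q_T}$). Applying the blow-up formula $\Pic\,(\mathrm{Bl}_pY)=\pi^*\Pic\,(Y)\oplus\Z[E]$ twice gives, after identifying classes with their pullbacks, $\Pic\,(T)=\Z[H_1]\oplus\Z[H_2]\oplus\bigoplus_{ij}\Z[C_{ij,T}]$ and then $\Pic\,(X)=\Z[H_1]\oplus\Z[H_2]\oplus\bigoplus_{ij}\Z[C_{ij,T}]\oplus\Z[E_{Q_T}]$. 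The only bookkeeping is to replace $[C_{ij,T}]$ by the proper transform $[C_{ij,X}]$: the sixteen exceptional curves of $T\to\P$ are pairwise disjoint, so for $(i,j)\ne(1,1)$ one has $Q_T\notin C_{ij,T}$ and $[C_{ij,X}]$ is simply the pullback, while for $(1,1)$ the relation $p^*[C_{11,T}]=[C_{11,X}]+[E_{Q_T}]$ shows that $\{[C_{11,X}],[E_{Q_T}]\}$ and $\{p^*[C_{11,T}],[E_{Q_T}]\}$ span the same rank-$2$ summand. Hence the listed classes form a $\Z$-basis.

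For (2), let $g\in\Ker\rho$, so that $g^*=\id$ on $\NS\,(X)$. The key observation I would use is that a reduced irreducible curve $R$ with $(R^2)_V<0$ is the unique effective divisor in its numerical class: if $D\ge 0$ and $[D]=[R]$, then $D\cdot R=(R^2)_V<0$ forces $R\subseteq\mathrm{Supp}(D)$, and $D-R\ge 0$ is numerically trivial, hence $0$. Applying this to the $(-1)$-curve $E_{Q_T}$, the fixed class $[E_{Q_T}]$ has the unique effective representative $E_{Q_T}$, so $g(E_{Q_T})=E_{Q_T}$ and $g$ descends to $\bar g\in\Aut\,(T)$ with $\bar g(Q_T)=Q_T$. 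Since the pullback $p^*:\NS\,(T)\inj\NS\,(X)$ is injective and satisfies $p^*\bar g^*=g^*p^*=p^*$, we get $\bar g^*=\id$ on $\NS\,(T)$. Applying the same observation to the sixteen exceptional $(-1)$-curves $C_{ij,T}$ of $T\to\P$, $\bar g$ preserves each of them and therefore descends to $\tilde g\in\Aut\,(\P)$ acting trivially on $\NS\,(\P)=\Z[H_1]\oplus\Z[H_2]$ and fixing all sixteen points $(a,b)$. Triviality on $\NS\,(\P)$ means $\tilde g$ does not interchange the two rulings, so $\tilde g\in\mathrm{PGL}_2\times\mathrm{PGL}_2$, say $\tilde g=(\phi,\psi)$; then $\phi$ fixes $\{0,1,2,\infty\}$ and $\psi$ fixes $\{0,1,\lambda,\infty\}$, each at least three distinct points, whence $\phi=\psi=\id$ and $\tilde g=\id$. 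Unwinding the two descents gives $\bar g=\id$ and then $g=\id$, so $\rho$ is injective.

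The main obstacle is the descent bookkeeping: at each blow-down one must check that an automorphism fixing the relevant negative-curve classes genuinely preserves those curves, which is exactly where the uniqueness-of-effective-representative statement is needed, and that triviality on N\'eron--Severi is transmitted to the lower model through the injective, equivariant pullback. Once these compatibilities are in place, the classical rigidity of $\mathrm{PGL}_2$ (three fixed points force the identity) closes the argument, and no genericity beyond $\lambda\notin\{0,1,\infty\}$ is required for this particular statement.
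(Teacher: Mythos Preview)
Your proof is correct and follows essentially the same approach as the paper: both use the blow-up formula for (1), and for (2) both observe that irreducible curves of negative self-intersection are the unique effective divisors in their class, then descend the automorphism to $\P^1\times\P^1$ where the fixed $16$ points and the structure $\Aut(\P^1\times\P^1)=(\mathrm{PGL}_2\times\mathrm{PGL}_2)\cdot\langle s\rangle$ force the identity. The only cosmetic difference is that you descend in two steps ($X\to T\to\P$) while the paper blows down all negative curves at once.
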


\begin{proof} The first assertion is clear from the morphism $X \to T \to \P$.  
Let us show the second assertion. Let $g \in {\rm Ker}\, (\rho)$. 
Then $g([E_{Q_T}]) =[E_{Q_T}]$, $g([C_{ij, T}]) =  [C_{ij, T}]$ in ${\rm Pic}\,(X)$. Note that $|E_{Q_T}| = \{E_{Q_T}\}$ and $|C_{ij, X}| = \{C_{ij, X}\}$, as the curves are irreducible of negative self-intersection numbers. Therefore, $g(E_{Q_T}) =E_{Q_T}$, $g(C_{ij, X}) =  C_{ij, X}$ 
as divisors on $X$. Thus, $g$ descends to the automorphism $g_{\P} \in \Aut (\P^1 \times \P^1)$ under the blow-down $X \to \P = \P^1 \times \P^1$ 
so that the 16 points $(a_i, b_j)$ are pointwisely fixed by $g_{\P}$. Recall that
$$\Aut (\P^1 \times \P^1) = ({\rm PGL}\, (2, \C) \times  {\rm PGL}\, (2, \C)) \cdot \langle s \rangle,$$ 
where $s(x, y) := (y, x)$ for $(x,y)\in \P^1\times\P^1$. Then by $g_{\P}(a,b) = (a, b)$ for all $16$ points $(a, b)$, it follows that $g_{\P} = \id_{\P}$. Hence $g = \id_{X}$ as well.
\end{proof}

Note that the involutions $\iota_n \in \Aut (S)$ descend to the involutions $\iota_{n, T} \in \Aut (T)$ by Lemma \ref{theta}(3). Then by the choice of $Q$ and by Lemma \ref{ine}(2), $\iota_{n, T} \in \Ine (T, C_{11, T})$. Thus, the involutions $\iota_{n, T}$ lift to the involutions on $X$, which we will denote by $\iota_{n, X}$. 
   
The actual main theorem of this paper is the following, from which Theorem \ref{MainIntro} stated in Introduction is deduced. We use here the notion of genericity introduced just before Lemma \ref{pointsinC11}.

\begin{theorem}\label{RealMain}
For every generic point $Q \in C_{11}(\R)$, the number of the conjugacy classes of the involutions $\iota_n$  $(n \in \Z)$ in the group $\Aut (X)$ is infinite. 
\end{theorem}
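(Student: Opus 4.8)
The plan is to reduce the statement to Lemma \ref{infiniteconj} by constructing an injective homomorphism
$$\Phi : \Aut (X) \hookrightarrow G := G_0/\langle \theta \rangle, \qquad G_0 := \Aut (S, C_{11}, \{P, P_1, Q, \theta(Q)\}),$$
under which each $\iota_{n, X}$ is carried to the class of $\iota_n$. Granting such a $\Phi$, suppose that $\iota_{n, X}$ and $\iota_{m, X}$ are conjugate in $\Aut (X)$. Then the classes of $\iota_n$ and $\iota_m$ are conjugate in $G$, so $\hat g \iota_n \hat g^{-1} \in \{\iota_m, \theta \iota_m\}$ for some $\hat g \in G_0$. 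Every automorphism of $S$ scales $\omega_S$, so $\hat g \iota_n \hat g^{-1}$ acts on $\omega_S$ by the same sign as $\iota_n$, namely $-1$; since $\theta^* \omega_S = \iota_m^* \omega_S = -\omega_S$ gives $(\theta \iota_m)^* \omega_S = +\omega_S$, we must have $\hat g \iota_n \hat g^{-1} = \iota_m$, i.e.\ $\iota_n$ and $\iota_m$ are conjugate in $G_0$. As $Q$ is generic we have $Q \neq P, P_1$ and $\theta(Q) \notin \{P, P_1, Q\}$, so $P, P_1, Q, \theta(Q)$ are four distinct points of $C_{11}$; Lemma \ref{infiniteconj} (with $m = 4$) then shows that the $\iota_n$ represent infinitely many conjugacy classes in $G_0$, whence the $\iota_{n, X}$ represent infinitely many conjugacy classes in $\Aut (X)$.

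To construct $\Phi$, the key point is that every $g \in \Aut (X)$ preserves the exceptional curve $E_{Q_T}$ of $\sigma : X \to T$. I would deduce this from the class identity
$$2 E_{Q_T} = D_X + 2 K_X \quad \text{in} \quad \NS (X), \qquad D_X := \sum_{i=1}^{4}(E_{i, X} + F_{i, X}),$$
which follows from $K_X = \sigma^* K_T + E_{Q_T}$ and $-2K_T = \sum_i(E_{i, T} + F_{i, T})$ together with $Q_T \notin \sum_i(E_{i, T} + F_{i, T})$. Since $g$ preserves $[K_X]$ and (as explained below) permutes the eight curves $E_{i, X}, F_{i, X}$, it preserves $[D_X]$, hence $[E_{Q_T}] = [K_X] + \tfrac12[D_X]$; as $E_{Q_T}$ is the unique effective divisor in its class, $g(E_{Q_T}) = E_{Q_T}$. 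Consequently $g$ descends to $g_T \in \Aut (T)$ fixing $Q_T$; by Lemma \ref{theta}(3) it lifts to some $\hat g \in \Aut (S)$, and since $\pi^{-1}(Q_T) = \{Q, \theta(Q)\}$ this $\hat g$ lies in $\Aut (S, \{Q, \theta(Q)\}) = G_0$ by Lemma \ref{pointsinC11}. The resulting map $\Phi$ is a homomorphism carrying $\iota_{n, X}$ to the class of $\iota_n$, by the very definition of $\iota_{n, X}$ as the lift of $\iota_{n, T}$. It is injective: if $\Phi(g) = \id$, then $g$ preserves $E_{Q_T}$ and descends to $\id_T$, so it fixes every generator $[H_i], [C_{ij, X}], [E_{Q_T}]$ of $\Pic (X)$, whence $g = \id_X$ by Lemma \ref{Picard}(2).

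The main obstacle is the claim, used above, that $E_{i, X}, F_{i, X}$ are the \emph{only} $(-4)$-curves on $X$, so that $\Aut (X)$ must permute them. I would first show that the only $(-4)$-curves on $T$ are the eight branch curves $E_{i, T}, F_{i, T}$: an irreducible $(-4)$-curve $R_T$ on $T$ not in the branch locus pulls back under the double cover $\pi$ either to an irreducible curve of self-intersection $-8$, or to a sum $R + \theta(R)$ of distinct irreducible curves with $R \cdot \theta(R) < 0$, both of which are impossible on the K3 surface $S$. Passing to $X$, an irreducible $(-4)$-curve is either the proper transform of one of these eight, or its image in $T$ is singular at $Q_T$; in the first interesting case it would force a negative curve other than $C_{11, T}$ to pass through the generic point $Q_T$, which is excluded by the genericity of $Q$ via Lemma \ref{specialpoint}, and the higher-multiplicity cases are ruled out by the further (countably many) genericity conditions that $Q_T$ not be a singular point of any curve of non-negative self-intersection. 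I expect this curve-counting on $X$ near $Q_T$ to be the most delicate part, precisely because $\Aut (X)$ is infinite and a priori produces many negative curves away from $E_{Q_T}$; once it is settled, the descent defining $\Phi$ and the $\omega_S$-sign bookkeeping of the first paragraph complete the proof.
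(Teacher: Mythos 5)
Your first two paragraphs are essentially sound and in fact reproduce the endgame of the paper's own argument: the identification $\Aut(T,Q_T)=\Aut(S,\{Q,\theta(Q)\})/\langle \theta\rangle=\Aut(S,C_{11},\{P,P_1,Q,\theta(Q)\})/\langle \theta\rangle$ via Lemmas \ref{theta}(3) and \ref{pointsinC11}, the $\omega_S$-sign argument to pass infinitude of conjugacy classes through the central quotient by $\theta$, and the appeal to Lemma \ref{infiniteconj} with the four points $P,P_1,Q,\theta(Q)$. The genuine gap is the claim on which your whole reduction rests: that every $g\in\Aut(X)$ preserves $E_{Q_T}$, which you reduce to ``the eight curves $E_{i,X},F_{i,X}$ are the only $(-4)$-curves on $X$.'' Your double-cover analysis correctly handles curves not through $Q_T$ and the multiplicity-one case, but the case $m\ge 2$ (an irreducible $R_T\subset T$ with $(R_T)^2=m^2-4$ and ${\rm mult}_{Q_T}R_T=m$) is exactly where the proof fails, and it is \emph{not} excluded by the genericity in the statement: ``generic'' means $Q\notin A\cup\{P,P_1\}$, and $A$ only records intersections of $C_{11}$ with \emph{negative} curves of $S$. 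Nothing in that condition prevents $Q$ from being, say, the node of an irreducible curve $R\subset S$ with $R^2=0$ and $R\cap\theta(R)=\emptyset$ (an irreducible singular fiber of an elliptic pencil not passing through $\theta(Q)$; such nodes need not lie on any negative curve). For such a $Q$, the curve $\pi(R)$ is irreducible on $T$ with $\pi(R)^2=0$ and a node at $Q_T$, so its proper transform on $X$ is an irreducible $(-4)$-curve distinct from the eight $E_{i,X},F_{i,X}$ --- generically even a smooth rational one with $K_X$-degree $2$, hence numerically indistinguishable from them --- and your argument collapses. Your fallback, that ``countably many further genericity conditions'' fix this, is asserted rather than proved, and it is not automatic: on a rational surface the singular points of irreducible members of linear systems of non-negative self-intersection can sweep out the whole surface (nodes of nodal cubics fill $\P^2$), so countability would itself need a real argument (non-uniruledness of the K3 cover to rigidify rational curves, Severi-type dimension counts for higher genus, and a separate treatment of $m\ge 3$ curves meeting the branch locus). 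Even granting all that, you would have proved the theorem only for a strictly smaller set of points than ``every generic $Q$,'' i.e.\ a different statement.

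The idea you are missing --- and what makes the weaker genericity of the statement suffice --- is that one never needs to control all of $\Aut(X)$: it is enough to control those $g$ with $g^{-1}\circ\iota_{n,X}\circ g=\iota_{m,X}$, and for such $g$ one can use fixed loci instead of a classification of negative curves. Indeed, such a $g$ satisfies $g(X^{\iota_{m,X}})=X^{\iota_{n,X}}$, and $X^{\iota_{n,X}}$ consists of one isolated point $P_{n2}\in E_{Q_T}$ together with four curves, exactly one of which, $\pi(f_1^{-n}(\Sigma_0))^X$ (the image of the genus-$4$ curve in $S^{\iota}$, cf.\ Lemma \ref{iota}(1)), has positive self-intersection; hence $g$ must match these big curves and send $P_{m2}$ to $P_{n2}$. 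The finitely many irreducible curves of $T$ orthogonal to the nef and big class $\pi(f_1^{-n}(\Sigma_0))$ have negative self-intersection, so their preimages in $S$ lie in the family $\sA$, and the stated genericity of $Q$ already guarantees that $Q_T$ lies on none of them except $C_{11,T}$. Since the irreducible curves of $X$ disjoint from $\pi(f_1^{-n}(\Sigma_0))^X$ are exactly $E_{Q_T}$ and the proper transforms of those finitely many curves, and $P_{n2}$ lies on $E_{Q_T}$ and on none of the others, one gets $g(E_{Q_T})=E_{Q_T}$ for these particular $g$; after that, your own two paragraphs finish the proof. In short: your descent and counting machinery is the right endgame, but replacing the paper's fixed-locus argument by a global $(-4)$-curve classification both exceeds what the stated genericity can deliver and leaves the hardest geometric claim unproved.
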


\begin{proof}[Proof of Theorem~\ref{RealMain} implies Theorem~\ref{MainIntro}]

We closely follow an argument of \cite{Le18}. Let $c = (\id_{X_{\R}}, c)$ be the complex conjugation of 
$$X = X_{\R} \times_{{\rm Spec}\, \R} {\rm Spec}\, \C \to {\rm Spec}\, \C$$
with respect to the natural real structure $X_{\R}$ of $X$ in the construction. As the divisors in the formula in Lemma~\ref{Picard}(1) are all defined over $\R$ with respect to the natural real structure of $X$, it follows that $c^*$ is trivial on ${\rm Pic}\, (X)$. Thus, we have $(c \circ f \circ c)^* = f^*$ on ${\rm Pic}\,(X)$ if $f\in \Aut (X)$ . Since $c \circ f \circ c \in \Aut (X)$, it follows from Lemma~\ref{Picard}(2) that $c \circ f \circ c = f$, that is, $f$ is defined over $\R$ and the conjugate action of the Galois group ${\rm Gal} (\C/\R) = \langle c \rangle$ on $\Aut (X)$ is trivial as well. Thus, by \cite[Lemma 13]{Le18} and by the fact that the conjugacy classes of $\{\iota_{n, X}\}$ under $\Aut (X)$ is infinite by Theorem~\ref{RealMain}, we conclude that $X$ admits infinitely many mutually non-isomorphic real forms.
\end{proof}

\begin{proof}[Proof of Theorem~\ref{RealMain}]

We denote by $\Sigma_0$ the unique smooth curve of genus $4$ in $S^{\iota}$  given in Lemma \ref{iota}(1). Since the involutions $\iota$ and $\theta$ commute, $\iota\circ\theta$ is a nontrivial involution of the K3 surface $S$ preserving $\omega_S$. Thus, $S^{\iota\circ\theta}$ consists of exactly eight points by the fundamental result due to Nikulin \cite{Ni80}, see also \cite[(0.1)]{Mu88}. On the other hand, according to Lemma \ref{iota}(2), $\iota$ acts as a nontrivial involution on the four disjoint smooth rational curves $C$, $F_1$, $F_3$, $E_4$. Then each of the eight distinct points $C^\iota\cup F_1^\iota\cup F_3^\iota\cup E_4^\iota$ is fixed by both $\theta$ and $\iota$ by Lemma \ref{theta}(1). From these observations, we conclude that 
$$C^\iota\cup F_1^\iota\cup F_3^\iota\cup E_4^\iota=S^\iota\cap S^\theta=S^{\iota\circ\theta}=\{x\in S |\, \iota(x)=\theta(x) \}.$$ 
Thus, by Lemma \ref{iota}(1)(3), the fixed locus $T^{\iota_{n, T}}$ of $\iota_{n, T} \in \Aut (T)$ is equal to the disjoint union of the four smooth irreducible curves 
$$C_{11, T}=\pi(f_1^{-n}(C_{11}))\,\, ,\,\, \pi(f_1^{-n}(C_{31}))\,\, ,\,\, \pi(f_1^{-n}(C_{34}))\,\, ,\,\, \pi(f_1^{-n}(\Sigma_0)),$$ 
whose self-intersection numbers are $-1$, $-1$, $-1$, $3$ because the self-intersection numbers of 
$C_{11}, C_{31}, C_{34}, \Sigma_0$ are $-2, -2, -2, 6$
respectively. 

Since $\Sigma_0$ is a nef and big divisor on $S$, so is $\pi(f_1^{-n}(\Sigma_0))$ on $T$. It follows that there are only finitely many irreducible curves $D_{n1},\ldots,D_{nk}$ for some positive integer $k\ge 3$ such that $(D_{ni}.\pi(f_1^{-n}(\Sigma_0)
))_{T}=0$ for $i=1,\ldots,k$. We may and will assume that $D_{n1}= C_{11, T}$, $D_{n2}=\pi(f_1^{-n}(C_{31}))$,  $D_{n3}=\pi(f_1^{-n}(C_{34}))$. 
Observe that all these curves have negative self-intersection numbers. Therefore, except for $i=1$, they belong to the family $\sA$ defined above.
This, together with Lemma \ref{pointsinC11}, imply 

\begin{enumerate}
\item[(Q1)] $\Aut(S, \{Q, \theta(Q)\})=\Aut (S, C_{11}, \{Q, \theta(Q)\})=\Aut (S, C_{11}, \{P, P_1, Q, \theta(Q)\})$;
\item[(Q2)] $Q_{T} := \pi(Q) \notin D_{ni}\cap C_{11, T}$ for any $n$ and any $i\ge 2$.
\end{enumerate}

Recall that $X = X_{Q_T}$ is the blow up of $T$ at the point $Q_{T} \in C_{11, T}(\R)$. Let $E_{Q_T} \subset X$ be the exceptional curve over $Q_{T}$. We may and will identify $\Aut(T, Q_{T})$ as a subgroup of $\Aut (X)$ in a natural manner. 
Suppose $g\in \Aut (X)$ and $g^{-1}\circ \iota_{n,X}\circ g= \iota_{m, X}$ for some $m,n\geq 0$. We first show that $g\in \Aut(T, Q_{T})$. 

Since $\iota_{n,T}$ is a nontrivial involution and $C_{11,T}$ is contained in its fixed locus, $\iota_{n, X}$ acts as a nontrivial involution on $E_{Q_T}$. Therefore, it fixes exactly two points of $E_{Q_T}$, say $P_{n1}$ and $P_{n2}$. We may assume that $P_{n1}= C_{11, X}\cap E_{Q_T}$. Here, $C_{11, X}$ is the proper transform of $C_{11, T}$ in $X$. Then the fixed point locus of $\iota_{n, X}\in \Aut (X)$ (resp. $\iota_{m, X}\in \Aut (X)$) is the disjoint union of 
the point $P_{n2}$ (resp. $P_{m2}$) with four smooth irreducible curves, of self-intersection numbers $-2,-1,-1,3$,  which are the proper transforms of the fixed locus of $\iota_{n,T}$ (resp. $\iota_{m,T}$). Since  $g^{-1}\circ \iota_{n,X}\circ g= \iota_{m, X}$, we have
$$g(X^{\iota_{m, X}})=X^{\iota_{n, X}}.$$
It follows that 
$$g(C_{11, X})= C_{11, X},\ g(\pi(f_1^{-m}(\Sigma_0))^X)=\pi(f_1^{-n}(\Sigma_0))^X,\ g(P_{m2})=P_{n2},$$
where we add the superscript $X$ to denote the proper transform of a curve to $X$.
Note that the irreducible curves in $X$ which do not intersect $\pi(f_1^{-n}(\Sigma_0))^X$ (resp. $\pi(f_1^{-m}(\Sigma_0))^X$) are exactly $E_{Q_T}, D_{n1}^X,\ldots,D_{nk}^X$ (resp. $E_{Q_T}, D_{m1}^X,\ldots,D_{mk}^X$). Thus, 
$$g(E_{Q_T} \cup D_{m1}^X\cup\ldots\cup D_{mk}^X)=E_{Q_T} \cup D_{n1}^X\cup \ldots\cup D_{nk}^X.$$
 Then by Property (Q2) above, we have that 
 $E_{Q_T}$ doesn't intersect $D_{mi}^X$ and $D_{ni}^X$. Using that $g(P_{m2})=P_{n2}$, we obtain
 $g(E_{Q_T})=E_{Q_T}$. Therefore, $g\in \Aut(T, Q_{T})$. 

Using Property (Q1) and Lemma \ref{theta}(3), we have 
$$\Aut(T, Q_{T})=\Aut(S, \{Q, \theta(Q)\})/\langle \theta \rangle=\Aut(S,C_{11},\{P,P_1,Q,\theta(Q)\})/\langle \theta \rangle.$$ 
Then by Lemma \ref{infiniteconj},  the conjugacy classes of $\{\iota_{n, X}\}$ in the group  $\Aut(T, Q_T)$ is infinite. Since $\iota_{n, X}$ cannot be conjugated to $\iota_{m, X}$ by elements in $ \Aut (X)\setminus  \Aut(T, Q_T)$ as observed above, the conjugacy classes of $\{\iota_{n, X}\}$ in the group $\Aut (X)$ is infinite too. This completes the proof.
\end{proof}

%%%%%%%%%%%%
\section{Proof of Theorem \ref{Main2Intro}}\label{sect4}

We continue to use the same notations as in Section~\ref{sect2}. For instance, $\lambda \in \R$ is generic and 
$$P = C \cap C_{11} \in C\, ,\, C_{11} \subset S = {\rm Km}\, (E \times F)\,  ,\, f_1, \iota \in \Aut (S)$$
are the same as in Section \ref{sect2} and all defined over $\R$ with respect to the natural real structure of the elliptic curves $E$ and $F$.

In this section, we prove the following theorem, from which Theorem \ref{Main2Intro} clearly follows. 

\begin{theorem}\label{Mukai}
There exists a point $Q\in C_{11}(\R)$ such that the blow up $S'$ of $S$ at $Q$ satisfies the following two statements:
\begin{enumerate}
\item $\Aut (S')$ is discrete and not finitely generated; and
\item $S'$ admits infinitely many real forms which are mutually non-isomorphic over $\R$.
\end{enumerate}
\end{theorem}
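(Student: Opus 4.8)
The plan is to reuse almost all of the machinery already developed for Theorem~\ref{RealMain}, but now working directly on the K3 surface $S$ and its one-point blow up $S'$ rather than on the rational surface $X$. The key observation is that the involutions $\iota_n \in \Aut(S)$ lift to involutions $\iota_{n,S'}$ on the blow up $S'$ at a suitable point $Q \in C_{11}(\R)$, and the entire conjugacy-class argument of Theorem~\ref{RealMain} can be run one level down, on $S'$, without passing to the quotient by $\theta$. First I would choose $Q \in C_{11}(\R)$ to be a generic point in the sense defined just before Lemma~\ref{pointsinC11}, so that the conclusions of Lemma~\ref{specialpoint} apply to the pair $(S, C_{11})$ and to all the finitely many negative curves $D_{ni}$ meeting $\pi(f_1^{-n}(\Sigma_0))$; I would also impose the analogue of condition (Q2), namely that $Q$ avoids all the finitely many intersection points of $C_{11}$ with the curves $f_1^{-n}(C_{31})$, $f_1^{-n}(C_{34})$ and the other negative curves orthogonal to $f_1^{-n}(\Sigma_0)$ on $S$.

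For assertion (2), I would first prove the analogue of Theorem~\ref{RealMain} on $S'$: the number of conjugacy classes of the involutions $\iota_{n,S'}$ in $\Aut(S')$ is infinite. The argument mirrors the proof of Theorem~\ref{RealMain} line by line. Each $\iota_n$ already lies in $\Ine(S, C_{11})$ by Lemma~\ref{iota}, so it lifts to $S'$ and fixes $C_{11}$ pointwise, hence fixes the exceptional curve $E_Q$ setwise and acts as a nontrivial involution on it with exactly two fixed points, one of which is $C_{11,S'} \cap E_Q$. The fixed locus of $\iota_{n,S'}$ is then the disjoint union of that second fixed point with the proper transforms of $C_{11}$, $f_1^{-n}(C_{31})$, $f_1^{-n}(C_{34})$, and the genus-$4$ curve $f_1^{-n}(\Sigma_0)$, with self-intersection numbers $-2,-1,-1,4$. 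Using that $f_1^{-n}(\Sigma_0)$ is nef and big on $S$ (so on $S'$ up to the one exceptional curve), an element $g \in \Aut(S')$ conjugating $\iota_{n,S'}$ to $\iota_{m,S'}$ must match up these fixed loci and the finite orthogonal families of negative curves; condition (Q2) then forces $g(E_Q) = E_Q$, so $g \in \Aut(S, Q)$. By Lemma~\ref{specialpoint} applied to $(S, C_{11})$, we get $\Aut(S, Q) = \Aut(S, C_{11}, Q)$, and then the infinitude of conjugacy classes of $\iota_n$ in $\Aut(S, C_{11}, Q)$ follows from Lemma~\ref{ine}(2) together with the finite-index argument of Lemma~\ref{infiniteconj} (noting that a single marked point on $C_{11}$ already cuts the full $\Ine(S,C_{11})$ down by a group of automorphisms fixing $Q$, and $\Ine^\pm$ remains finite index). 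Once this is established, the deduction that $S'$ has infinitely many non-isomorphic real forms is identical to the proof that Theorem~\ref{RealMain} implies Theorem~\ref{MainIntro}: since $S$, $C_{11}$, and $Q$ are all defined over $\R$, the N\'eron–Severi group of $S'$ is generated by $\R$-classes, so complex conjugation acts trivially on $\NS(S')$ and hence, by the injectivity of $\Aut(S') \to \Aut(\NS(S'))$ (which holds for a blow up of a K3 with discrete automorphisms), trivially on $\Aut(S')$; then \cite[Lemma 13]{Le18} converts infinitely many conjugacy classes into infinitely many real forms.

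For assertion (1), that $\Aut(S')$ is discrete and not finitely generated, discreteness is immediate because $S'$ carries no nonzero global vector fields (it dominates a K3 surface, so $H^0(S', T_{S'}) = 0$), giving $\Aut^0(S') = \{\id\}$. For non-finite-generation I would argue that $\Aut(S')$ contains infinitely many distinct conjugacy classes of the involutions $\iota_{n,S'}$, and invoke the standard fact that a finitely generated group has only finitely many conjugacy classes of elements of bounded order that arise as images under a fixed-rank integral representation; more precisely, the injective representation $\rho: \Aut(S') \to \Aut(\NS(S'))$ into an arithmetic group $O(\NS(S'))$ has the property that a finitely generated subgroup meets any conjugacy class of finite-order elements in finitely many classes, so infinitely many conjugacy classes of $\iota_{n,S'}$ is incompatible with finite generation. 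This is exactly the mechanism behind the remark in the introduction that all previously known examples have non-finitely-generated discrete automorphism groups.

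The main obstacle I anticipate is the conjugation-matching step on $S'$: I must rule out that some $g \in \Aut(S')$ conjugates $\iota_{n,S'}$ to $\iota_{m,S'}$ while moving the exceptional curve $E_Q$, and the configuration of negative curves on $S'$ is genuinely richer than on the quotient $X$ because no curves have been collapsed by $\theta$. Controlling all irreducible curves orthogonal to the nef-and-big class $f_1^{-n}(\Sigma_0)$ and checking that genericity of $Q$ (finitely many conditions for each $n$, countably many in total, hence avoidable on the uncountable set $C_{11}(\R)$) suffices to force $g(E_Q) = E_Q$ is where the real care is needed; this is precisely the place where the uncountability of $\R$ is used, as emphasized in the introduction.
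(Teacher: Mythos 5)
There is a genuine gap, and it is concentrated in assertion (1). You deduce non-finite generation of $\Aut(S')$ from the existence of infinitely many conjugacy classes of the involutions $\iota_{n,S'}$, by invoking a ``standard fact'' that a finitely generated group admitting a faithful integral representation (here $\rho:\Aut(S')\to\Aut(\NS(S'))$) can meet conjugacy classes of finite-order elements in only finitely many classes. No such fact exists. Finiteness of the number of conjugacy classes of torsion elements is a theorem about \emph{arithmetic} groups (e.g.\ the full group $\Aut(\NS(S'))$, by Jordan--Zassenhaus type results); it is not a consequence of finite generation plus integral linearity, and $\Aut(S')$ is exactly a \emph{thin}, non-arithmetic subgroup here. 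Indeed, by Jordan--Zassenhaus the images of the $\iota_{n,S'}$ in the ambient group $\Aut(\NS(S'))$ fall into finitely many $\Aut(\NS(S'))$-conjugacy classes, so their non-conjugacy inside $\Aut(S')$ produces no tension whatsoever with finite generation of $\Aut(S')$ at the level of the representation. Tellingly, if your implication were valid, the non-finite-generation results of \cite{Le18} and \cite{DO19} would be immediate corollaries of their conjugacy-class counts, whereas those papers (and this one) need a separate mechanism. The paper's proof of (1) is entirely different: it sets $f_3:=\iota\circ\iota_1$, which lies in $\Ine^s(S,C_{11})$ and satisfies $f_3|_C(x)=x+1$, so that $f_1^{-n}\circ f_3\circ f_1^n\in\Ine^s(S,C_{11})$ restricts on $C$ to $x\mapsto x+2^{-n}$; since the image of $\tau:\Ine^s(S,C_{11})\to\Aut(C,P)$ is \emph{abelian} by Lemma~\ref{ine}(1), this image contains the non-finitely generated group $\Z[1/2]$ of translations and hence is itself not finitely generated; therefore $\Ine^s(S,C_{11})$ is not finitely generated (a homomorphic image of a finitely generated group is finitely generated), and neither is $\Aut(S')$, because $\Ine^s(S,C_{11})$ has finite index in $\Aut(S')$. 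None of this -- neither $f_3$ nor the abelian-image argument -- appears in your proposal, so assertion (1) is unproven.

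For assertion (2) your route could be made to work, but it misses the decisive simplification that makes the K3 case far easier than Theorem~\ref{RealMain}: since $K_{S'}=E_Q$, one has $|K_{S'}|=\{E_Q\}$, so \emph{every} automorphism of $S'$ preserves $E_Q$, giving $\Aut(S')=\Aut(S,Q)=\Aut(S,C_{11},Q)=\Aut(S,C_{11},\{P,P_1,Q\})$ (the middle equality by the choice of $Q$ via Lemma~\ref{specialpoint}, the last via Lemma~\ref{theta}(1)); then Lemma~\ref{infiniteconj}, applied to the three points $P,P_1,Q$, yields infinitely many conjugacy classes in $\Aut(S')$ outright, with no fixed-locus matching, no analogue of condition (Q2), and no genericity beyond Lemma~\ref{specialpoint}. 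Your re-run of the Theorem~\ref{RealMain} machinery also contains numerical slips (on $S'$ the fixed curves of $\iota_{n,S'}$ have self-intersections $-3,-2,-2,6$, not $-2,-1,-1,4$), though these do not break its structure. More seriously, your justification of the triviality of the Galois action -- injectivity of $\Aut(S')\to\Aut(\NS(S'))$, claimed to ``hold for a blow up of a K3 with discrete automorphisms'' -- is false as stated: $\Aut(S)\to\Aut(\NS(S))$ is itself not injective, because $\theta$ fixes all $24$ curves of Figure~\ref{fig1}, whose classes span $\NS(S)\otimes\Q$, so $\theta$ lies in the kernel; discreteness of the automorphism group is irrelevant to this. (The statement you want is salvageable because $\theta(Q)\neq Q$ for generic $Q$, but that requires identifying the kernel of $\Aut(S)\to\Aut(\NS(S))$; the paper sidesteps the issue by quoting \cite[Lemma 4.6]{DO19}, which gives that $\Gal(\C/\R)$ acts trivially on $\Ine^{\pm}(S,C_{11})$, before applying \cite[Lemma 13]{Le18}.)
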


\begin{proof}
Note that $C_{11}(\R)$ is an uncountable set. By Lemma \ref{specialpoint}, there exists a point $Q\in C_{11}(\R)\setminus (C\cup F_1)$ such that 
$$\Aut (S, Q) = \Aut (S, C_{11}, Q).$$
Let $S'$ be the blow up of $S$ at $Q$ and $E_Q \subset S'$ be the exceptional divisor. Then 
$$\Aut (S,C_{11}, Q)=\Aut (S, Q)=\Aut (S').$$
Note that $|K_{S'}| = \{E_Q\}$ by the canonical bundle formula. The last equality follows from this. 

Recall the affine coordinate $x$ of $C$ in Section \ref{sect2} and the actions of $f_1$, $\iota$ on $C$ described as in Lemma \ref{iota}. In particular, we have 
$$\iota|_{C}(x) = 2 -x\,\, ,\,\, \iota_n|_{C}(x)=(f_1^{-n}\circ \iota \circ f_1^n)(x)=\frac{1}{2^{n-1}}-x.$$
Let $f_3:=\iota \circ\iota_1$. Then $f_3 \in \Aut (S, C)$ and we obtain
$$f_3^*\omega_{S} = \omega_{S}
 \quad \text{and} \quad f_3|_C(x)=x+1.$$
 By Lemma \ref{ine}, we have the representation 
$$\tau: \Ine^s (S,C_{11})\longrightarrow \Aut (C,P)$$ 
and the image $\tau( \Ine^s (S,C_{11}))$ is an abelian group. On the other hand, since $f_1^{-n}\circ f_3\circ f_1^n\in \Ine^s (S,C_{11})$ and 
$$f_1^{-n}\circ f_3\circ f_1^n|_{C} (x) = x + \frac{1}{2^n},$$ 
it follows that 
the {\it abelian} group $\tau( \Ine^s (S,C_{11}))$ contains a non-finitely generated abelian group as a subgroup, see e.g. \cite[Proposition\, 2.5(2)]{DO19}. Thus, $ \Ine^s (S,C_{11})$ is not finitely generated. Consider the sequence of finite index subgroups (see the end of the proof of Lemma \ref{infiniteconj} for the first inclusion)
$$\Ine^{s}(S,C_{11}) \subset \Ine (S,C_{11}) \subset \Aut(S, C_{11}, \{P,P_1,Q\})=\Aut (S,C_{11}, Q)= \Aut (S, Q),$$
and the equality
$$\Aut (S, Q) = \Aut (S')$$
under the natural identification made above (using the fact that $S$ is a smooth K3 surface). Therefore, $\Aut (S')$ is not finitely generated, as it has a finite index subgroup $\Ine^{s}(S,C_{11})$ which is not finitely generated, see e.g. \cite[Proposition\, 2.5(1)]{DO19}. Hence (1) is proved. 

Since $Q\in C_{11}(\R)\subset S(\R)$, it follows that $S'$ is defined over $\R$.  Consider the group
$$\Ine^{\pm}(S,C_{11}) := \{g\in \Ine(S, C_{11}) |\, g^*(\omega_S)=\pm \omega_S\}$$ 
defined in Section \ref{sect2}. By Lemma \ref{ine}, $\iota_n\in \Ine^{\pm}(S,C_{11})$ and $\Ine^{\pm}(S,C_{11})$ is a finite index subgroup of $\Aut (S, C_{11}, \{P,P_1,Q\}) = \Aut(S')$ (under the natural identification made above). By Lemma \ref{infiniteconj}, $\iota_n$ define infinitely many conjugacy classes in $\Ine^{\pm}(S,C_{11})$. Note that ${\rm Gal}(\C/\R)$ acts trivially on $\Ine^{\pm}(S,C_{11})$, see e.g. \cite[Lemma 4.6]{DO19}. Then by \cite[Lemma 13]{Le18}, $S'$ admits infinitely many real forms which are mutually non-isomorphic over $\R$.
\end{proof}

\end{document}